\newcommand{\abs}[1]{\lvert #1 \rvert}
\newcommand{\norm}[1]{\lVert #1 \rVert}
\definecolor{rot}{rgb}{0.000,0.000,0.00}
\newcommand{\R}{{\mathbb R}}
\newcommand{\C}{{\mathbb C}}
\newcommand{\s}{{\mathbb S}}
\newcommand{\be}{\begin{eqnarray}}
\newcommand{\ben}{\begin{eqnarray*}}
\newcommand{\en}{\end{eqnarray}}
\newcommand{\enn}{\end{eqnarray*}}
\newcommand{\real}{{\rm Re\,}}
\newcommand{\ima}{{\rm Im\,}}
\newcommand{\G}{\Gamma}
\newtheorem{thm}{Theorem}[section]
\newtheorem{lem}{Lemma}[section]
\theoremstyle{definition}
\theoremstyle{remark}
\numberwithin{equation}{section}
\title{\bf Shape identification in inverse medium scattering problems with a single far-field pattern}
\author{Guanghui Hu\thanks{Weierstrass Institute, Mohrenstr. 39,
10117 Berlin,
Germany. Email: {\tt hu@wias-berlin.de}}\,,
Mikko Salo\thanks{University of Jyvaskyla, Department of Mathematics and Statistics, P.O.\ Box 35 (MaD), FI-40014 University of Jyvaskyla, Finland. Email: \tt{mikko.j.salo@jyu.fi}}\,,
Esa V\!. Vesalainen\thanks{University of Jyvaskyla, Department of Mathematics and Statistics, P.O.\ Box 35 (MaD), FI-40014 University of Jyvaskyla, Finland. Email: \tt{esavesalainen@gmail.com}}
}
\begin{document}
\maketitle

\begin{abstract}
Consider time-harmonic acoustic scattering from a bounded penetrable obstacle $D\subset \R^N$ embedded in a homogeneous background medium. The index of refraction characterizing the
material inside $D$ is supposed to be H\"older continuous near the corners. If $D\subset \R^2$ is a convex polygon, we prove that
its shape and location can be uniquely determined by the far-field pattern incited by a single incident wave at a fixed frequency. In dimensions $N \geq 3$, the uniqueness applies to penetrable scatterers of rectangular type
 with additional assumptions on the smoothness of the contrast. 
 Our arguments are motivated by recent studies on the absence of non-scattering wavenumbers in domains with corners. As a byproduct, we show that the smoothness conditions in previous corner scattering results are only required near the corners.
\end{abstract}

\section{Introduction and main results}\label{Sec:1}

Assume a time-harmonic incident wave is incident onto a bounded penetrable
obstacle $D\subset \R^N$  ($N \geq 2$) embedded in a homogeneous medium.
The incident field $u^{\mathrm{in}}$ may be any non-trivial solution in $L^2_{\mathrm{loc}}(\R^N)$ of  the Helmholtz equation
\ben
\Delta u^{\mathrm{in}}+k^2 u^{\mathrm{in}}=0 \quad\mbox{in}\quad \R^N,
\enn
where $k>0$ is the wavenumber. For instance, the incident wave is allowed to be a plane wave $\exp(ikx\cdot d)$ with incident direction $d\in \s^{N-1}:=\{x\in \R^{N}: |x|=1\}$,  or a Herglotz wave of the form
\ben
u^{\mathrm{in}}(x)=\int_{\s^{N-1}} \exp(ikx\cdot d)\,g(d)\,ds(d),\quad g\in L^2(\s^{N-1}).
\enn In this paper we suppose the scatterer $D$ to be a convex polygon in $\R^2$ or a convex polyhedron in $\R^N$.
The physical properties of the inhomogeneous medium $D$ can be characterized by the refractive index function (or potential) $q(x)$. Without loss of generality we suppose $q(x)= 1$ for $x\in D^e=\R^N\backslash\overline{D}$ due to the homogeneity of the background medium.

Denote by $u=u^{\mathrm{in}}+u^{\mathrm{sc}}$ the total field generated by $u^{\mathrm{in}}$, where
$u^{\mathrm{sc}}$ is the outgoing scattered field which satisfies the Helmholtz equation
$(\Delta+k^2) u^{\mathrm{sc}}=0$ in $D^e$ and
the  Sommerfeld radiation condition
\begin{equation}\label{eq:radiation}
\lim_{|x|\rightarrow \infty} |x|^{\frac{N-1}{2}}\left\{ \frac{\partial u^{\mathrm{sc}}}{\partial |x|}-ik u^{\mathrm{sc}} \right\}=0,
\end{equation}
uniformly in all directions.
The propagation of the total wave is governed by the Helmholtz equation
\begin{equation}\label{eq:Helm}
\Delta u(x)+k^2 q(x) u(x)=0\quad\mbox{in}\quad \R^N.
\end{equation}
Across the interface $\partial D$, we assume the continuity of the total field and its normal derivative (already implicitly contained in the formulation (\ref{eq:Helm})), i.e.,
\be\label{TE}
u^+=u^-,\quad \partial_\nu u^+=\partial_\nu u^-\quad\mbox{on}\;\partial D.
\en
Here the superscripts $(\cdot)^\pm$ stand for the limits taken from outside and inside, respectively, and $\nu\in \s^{N-1}$ is the unit normal on $\partial D$ pointing into $D^e$.
The unique solvability of the scattering problem \eqref{eq:radiation}--\eqref{TE} in $H^2_{\mathrm{loc}}(\R^N)$ is well-known if $q\in L^\infty(\R^N)$ (see e.g.\ \cite[Chapter 8]{CK} or \cite[Chapter 6]{Kirsch_book}). In particular, the Sommerfeld radiation condition (\ref{eq:radiation}) leads to  the asymptotic expansion
\begin{equation}\label{eq:farfield}
u^{\mathrm{sc}}(x)=\frac{e^{ik |x|}}{|x|^{(N-1)/2}}\; u^\infty(\hat x)+\mathcal{O}\left(\frac{1}{|x|^{N/2}}\right),\quad |x|\rightarrow+\infty,
\end{equation}
 uniformly in all directions $\hat x:=x/|x|$, $x\in\mathbb{R}^N$. The function $u^\infty(\hat x)$ is a real-analytic function defined on $\s^{N-1}$ and is referred to as the \emph{far-field pattern} or the \emph{scattering amplitude} for $u^{\mathrm{in}}$.  The vector $\hat{x}\in\s^{N-1}$ is the observation direction of the far field.

This paper concerns the uniqueness in recovering the boundary $\partial D$ (or equivalently, the convex hull of the support of the contrast $q-1$) from the far-field pattern generated by one incident wave at a fixed frequency. {\color{rot} The study on global uniqueness with a single incident plane or point source wave is usually difficult and challenging. For sound-soft or sound-hard obstacles,
such uniqueness results have been obtained within the class of polyhedral or polygonal scatterers;
see e.g., \cite{Rondi05,CY,EY06,EY08,HL14, LHZ06}. The proofs rely heavily on the reflection principles for the Helmholtz equation
with respect to a Dirichlet or Neumann hyperplane and on properties of the incident wave (for instance,
plane waves do not decay at infinity and point source waves are singular). However, the approach of using reflection principles does not apply to penetrable scatterers due to the lack of "reflectible" (Dirichlet or Neumann) boundary conditions for the Helmholtz equation. To the best of our knowledge, uniqueness with one incident
wave is still unknown within the class of non-convex polyhedral obstacles of impedance type.}

Earlier uniqueness results on shape identification in inverse medium scattering were derived by sending plane waves with infinitely many directions at a fixed frequency (see e.g.,\cite{ElHu, Isakov08, Isakov90, KG, Kirsch93}), which results in overdetermined inverse problems.
Uniqueness with a single far-field pattern has been verified in two cases: $D$ is a ball (not necessarily centered at the origin) and $q\equiv q_0 \neq 1$ is a constant in $D$ \cite{HLL}, or $D$ is a convex polygon or polyhedron and $q$ is real-analytic on $\overline{D}$ satisfying $|q-1|>0$ on $\partial D$ \cite{ElHu2015}.
The unique determination of a variable index of refraction $q$ in $\R^N$ from  knowledge of the far-field patterns of all incident plane waves at fixed frequency, or by measuring the Dirichlet-to-Neumann map of the Helmholtz equation, has also been intensively studied. We refer to  \cite{SG, HN, Nac} and the survey \cite{Uhlmann_BMS} for results for $N \geq 3$ and to recent results \cite{B,OYa} for $N=2$.

The purpose of this article is to remove the real-analyticity assumption made in \cite{ElHu2015} on the refractive index. To do this, we employ a different method that is motivated by the recent studies \cite{BLS, PSV} on the absence of non-scattering wavenumbers in corner domains. This method relies on the construction of suitable complex geometrical optics (CGO) solutions to the Helmholtz equation. Recall that $k$ is called a non-scattering wavenumber if there is a nontrivial incident wave whose far-field pattern vanishes identically. If $k$ is a non-scattering wavenumber,
 the functions $w=u^{\mathrm{in}}|_D$ and $u|_D$ solve the interior transmission eigenvalue problem
\be\label{ITP}\left\{\begin{array}{lll}
\Delta w + k^2 w=0,\quad \Delta u+k^2 q u=0&&\mbox{in}\quad D,\\
w=u,\qquad \qquad \ \,\;\partial_\nu w=\partial_\nu u &&\mbox{on}\quad \partial D.
\end{array}\right.
\en
Thus each non-scattering wavenumber is an interior transmission eigenvalue (see the survey \cite{CH}). On the other hand, if $k^2$ is an interior transmission eigenvalue and if the non-trivial solution
 $w$ of (\ref{ITP})  has a real-analytic extension from $D$ to $\mathbb{R}^N$, then $k^2$ is also a non-scattering wavenumber. This implies that, when $k^2$ is a non-scattering wavenumber,
  the Cauchy data of the total field $u$ on $\partial D$ coincide with the Cauchy data of
a real-analytic function which satisfies the Helmholtz equation in a neighborhood of $D$. A similar phenomenon can be observed around a corner point, if two distinct convex polygons or polyhedra generate the same far-field pattern. Therefore, the argument for proving the absence of non-scattering wavenumber can be used for justifying uniqueness in determining the shape of a penetrable scatterer.

Let $D_j$ for $j=1,2$ be two penetrable scatterers with contrasts $q_j$. Denote by $u_j^\infty$ the far-field pattern of the scattered field caused by a fixed incoming wave $u^{\mathrm{in}}$ incident onto $D_j$ with fixed wavenumber $k > 0$. The first uniqueness result is in two dimensions, and applies to convex polygons.

\begin{thm}\label{shape-identification-in-2D}
Let $D_j \subset \mathbb R^2$ for $j=1,2$ be bounded convex polygons. Assume that $q_j\in L^\infty(\mathbb R^2)$ are contrasts such that $q_j\equiv1$ in $D_j^e$, and each vertex of $D_j$ has some neighborhood $U_j$ such that $q_j|_{\overline{D}_j \cap U_j}$ is $C^\alpha$ for some $\alpha>0$. Furthermore, assume that $q_j(O)\neq1$ for each vertex $O$ of $D_j$.
 Then the relation $u_1^\infty=u_2^\infty$ on $\mathbb S^1$ implies that $D_1=D_2$.
\end{thm}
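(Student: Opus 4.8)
We argue by contradiction and reduce matters to a single corner, in the spirit of the corner‑scattering results \cite{BLS,PSV}. Suppose $D_1\neq D_2$. Since a convex polygon is the convex hull of its vertices, after interchanging the indices we may assume $D_1$ has a vertex $O$ with $O\notin\overline D_2$; translating $O$ to the origin, choose $h>0$ so small that $\overline B\cap\overline D_2=\emptyset$ and $B\cap D_1=C$, where $B=B(O,h)$ and $C$ is the open angular sector at $O$, of opening $\theta_0\in(0,\pi)$ (convexity of $D_1$ is used here). By Rellich's lemma, $u_1^\infty=u_2^\infty$ implies $u_1^{\mathrm{sc}}=u_2^{\mathrm{sc}}$ on the unbounded component $\Omega$ of $\R^2\setminus(\overline D_1\cup\overline D_2)$; some ray from $O$ avoids both $\overline D_1$ and $\overline D_2$ (the directions blocked by each subtend an angle $<\pi$) and joins $B\setminus\overline C$ to $\Omega$, so unique continuation for the Helmholtz equation gives $u_1=u_2$ on $B\setminus\overline C$. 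Set $w:=u_2|_B$; since $q_2\equiv1$ on $B$, $w$ solves $\Delta w+k^2w=0$ there and is real‑analytic, and $w\not\equiv0$ because $u^{\mathrm{in}}\not\equiv0$ and a radiating Helmholtz solution on $\R^2$ is trivial. Put $r:=u_1-w\in H^2(B)$; then $r\equiv0$ on $B\setminus\overline C$, so $r$ has vanishing Cauchy data on the two edges of $C$ at $O$, and, with $\varphi:=q_1-1$,
\be\label{plan:req}
\Delta r+k^2 r &=& -k^2\,\varphi\,(w+r)\qquad\text{in }C\cap B,
\en
where, after shrinking $h$, $\varphi\in C^\alpha(\overline B)$ and $\varphi(O)\neq0$.

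The next step is to test \eqref{plan:req} against complex geometrical optics solutions $u_0(x)=e^{\rho\cdot x}$ with $\rho=\tau(\omega+i\omega^\perp)\in\C^2$, $\tau>0$, and $\omega,\omega^\perp\in\s^1$ an orthonormal pair, so that $\rho\cdot\rho=0$ and $u_0$ is harmonic. Since $\theta_0<\pi$ one may choose $\omega$ with $\omega\cdot x\le-\delta|x|$ on $C$ for some $\delta>0$, whence $|u_0|\le e^{-\delta\tau|x|}$ on $C$. Green's identity over $C\cap B$, combined with \eqref{plan:req}, the harmonicity of $u_0$, and the vanishing Cauchy data of $r$ on the edges (which annihilates those boundary terms, leaving only an arc integral on $\partial B\cap C$ of size $O(\tau e^{-\delta\tau h})$), yields
\be\label{plan:id}
\int_{C\cap B}\bigl(\varphi\,w+q_1\,r\bigr)\,u_0\,dx &=& O\bigl(\tau\, e^{-\delta\tau h}\bigr),\qquad \tau\to\infty.
\en
Let $m\ge0$ be the vanishing order of $w$ at $O$ and $W_m$ its leading homogeneous Taylor part; since $\Delta w=-k^2w$, $W_m$ is a nonzero harmonic homogeneous polynomial of degree $m$. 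A Laplace‑transform‑on‑a‑cone computation gives $\int_C W_m(x)\,u_0(x)\,dx=c(m,\theta_0,\omega)\,\tau^{-(m+2)}$ with an explicit constant, and using $\varphi\in C^\alpha$ and the smoothness of $w$ one finds that $\int_{C\cap B}\varphi w\,u_0\,dx=\varphi(O)\int_C W_m\,u_0\,dx+o(\tau^{-(m+2)})$. For the remaining contribution $\int_{C\cap B}q_1 r\,u_0\,dx$ I would run a bootstrap on \eqref{plan:req}: extending $r$ by zero across $\partial C$ makes $\Delta r$ an $L^\infty$ function on $B$ that vanishes off $C$ and satisfies $|\Delta r(x)|\le C|x|^{\min(m,s)}$ whenever $|r(x)|\le C|x|^s$ near $O$; inverting the Laplacian, and using that a function harmonic near $O$ which is $O(|x|^s)$ on the sector $B\setminus\overline C$ is $O(|x|^{\lceil s\rceil})$ near $O$, improves the decay of $r$ at $O$ step by step up to $r=O(|x|^{m+1})$, whence $\int_{C\cap B}q_1 r\,u_0\,dx=o(\tau^{-(m+2)})$. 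Feeding these estimates into \eqref{plan:id} and letting $\tau\to\infty$ forces $\varphi(O)=0$ as soon as $\int_C W_m\,u_0\,dx\neq0$.

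It thus remains to choose the CGO solution so that $\int_C W_m\,u_0\,dx\neq0$, which produces $\varphi(O)=0$, contradicting $q_1(O)\neq1$ and proving $D_1=D_2$; this non‑vanishing, together with pinning down the exact vanishing order of $r$ at the corner used above, is where the main difficulty lies. The idea is to use both natural families of harmonic exponentials — the ``holomorphic'' $e^{cz}$ and the ``anti‑holomorphic'' $e^{c\bar z}$ ones, in the complex coordinate $z=x_1+ix_2$ centred at $O$. Writing $W_m=a\,z^m+b\,\bar z^m$ with $(a,b)\neq(0,0)$, the two choices produce, up to nonzero factors, the numbers $a\,J_2+b\,J_{2m+2}$ and $a\,\overline{J_{2m+2}}+b\,\overline{J_2}$, where $J_\nu:=\int_{\theta_1}^{\theta_2}e^{-i\nu\theta}\,d\theta$ and $\theta_2-\theta_1=\theta_0$; these vanish at once only when $|J_2|=|J_{2m+2}|$, i.e. $(m+1)\,|\sin\theta_0|=|\sin((m+1)\theta_0)|$, which never happens for $\theta_0\in(0,\pi)$ and $m\ge1$ (and for $m=0$ either choice works, since $\int_C u_0\,dx$ is a nonzero multiple of $\tau^{-2}$ exactly because $\theta_0\neq\pi$). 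Hence a suitable $u_0$ always exists and the contradiction closes. The geometric reduction of the first paragraph and the Green's‑identity bookkeeping are routine, and the observation noted in the abstract — that H\"older regularity of $q_j$ is needed only near the corners — is automatic, since everything above takes place in a small ball around $O$.
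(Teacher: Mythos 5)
Your geometric reduction (Rellich, a vertex of one polygon outside the other, equal Cauchy data on the two edges) is exactly the paper's, and your explicit non-vanishing computation for $\int_C W_m u_0\,dx$ via the two families $e^{cz}$, $e^{c\bar z}$ and the determinant $|J_2|^2-|J_{2m+2}|^2\neq 0$ is correct; that part is essentially the argument of \cite{BLS}. Where you diverge from the paper is in the choice of test function: you use the purely harmonic exponential $u_0=e^{\rho\cdot x}$ with $\rho\cdot\rho=0$, whereas the paper (Lemma \ref{lem2}, via Lemma \ref{lem1}) uses CGO solutions $u=e^{-\rho\cdot x}(1+\psi)$ of the \emph{perturbed} equation $\Delta u+k^2\tilde q u=0$ with $\rho\cdot\rho=-k^2$ and $\|\psi\|_{L^6}=\mathcal O(\tau^{-1/3-\delta})$. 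That choice is not cosmetic: testing the perturbed equation against a solution of the same equation makes the difference $w=v_1-v_2$ drop out of Green's formula entirely, leaving only $\int_{S_{R/2}}(q-1)v_1u\,dx$ plus an exponentially small arc term, so no pointwise information about $w$ at the corner is ever needed. With your harmonic $u_0$ the term $\int_{C\cap B}q_1 r\,u_0\,dx$ survives in \eqref{plan:id} and must be shown to be $o(\tau^{-m-2})$, which forces you to prove the pointwise decay $r=\mathcal O(|x|^{m+\epsilon})$ at the corner.

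That is the genuine gap. First, the bootstrap is misstated: writing $r=N+h$ with $N$ the Newtonian potential of $\Delta r$ and $h$ harmonic, on $B\setminus\overline C$ one has $h=-N$, which is \emph{not} $\mathcal O(|x|^s)$ there, so the lemma ``a harmonic function that is $\mathcal O(|x|^s)$ on an open sector is $\mathcal O(|x|^{\lceil s\rceil})$'' cannot be applied to $h$ as you propose. The argument can be repaired by first subtracting from $N$ a Taylor polynomial $P$ with $N-P=\mathcal O(|x|^{t+2}\log(1/|x|))$ when $|\Delta r|\lesssim|x|^{t}$, then applying the sector lemma to $h+P$; but that pointwise Taylor expansion of the Newtonian potential of an $L^\infty$ source vanishing to order $t$ is a nontrivial pointwise Schauder/Campanato-type estimate, and the accumulation of $\log$ and $\varepsilon$ losses over the $\sim m/2$ iterations needed to reach $s>m$ must be tracked. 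This is precisely the corner-regularity territory the paper deliberately avoids (see the remark after Lemma \ref{lem2} that the Taylor-expansion approach of \cite{ElHu2015} extends only to smooth potentials); your route may well be completable for $C^\alpha$ contrasts, but as written the crucial step is both incorrectly formulated and unproven, and it is exactly the step the paper's CGO construction is designed to eliminate.
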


The next result applies in dimensions $N \geq 3$ but requires that the scatterers are closed rectangular boxes, i.e.\ sets of the form $\left[0,a_1\right]\times \cdots \times\left[0,a_N\right]$ for some $a_j > 0$ up to rotations and translations. We write $H^{s,p}$ for the fractional $L^p$ Sobolev space with smoothness index $s$.

\begin{thm}\label{shape-identification-in-3D}
Let $D_j \subset \mathbb R^N$ for $j=1,2$ be two rectangular boxes. Assume that $q_j\in L^\infty(\mathbb R^N)$ are contrasts such that $q_j\equiv1$ in $D_j^e$, and each corner of $D_j$ has some neighborhood $U_j$ such that $q_j|_{\overline{D}_j \cap U_j}$ has regularity $X$ as specified below. Furthermore, assume that $q_j(O)\neq1$ for each corner $O$ of $D_j$. Then the relation $u_1^\infty=u_2^\infty$ on $\mathbb S^{N-1}$ implies that $D_1=D_2$, provided that one of the following assumptions holds.
\begin{enumerate}
\item[(a)] $N = 3$ and $X = C^{\alpha}$ for some $\alpha>1/4$.
\item[(b)] $N \geq 3$ and $X = H^{s,p}$ for some $s, p$ with $1 < p \leq 2$ and $s > N/p$.
\end{enumerate}
\end{thm}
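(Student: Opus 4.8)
The plan is to reduce the statement to a purely local \emph{corner scattering} phenomenon at one vertex, and then to obtain a contradiction by testing the governing equation against complex geometrical optics (CGO) solutions, as in \cite{BLS,PSV} and as in the proof of Theorem~\ref{shape-identification-in-2D}; the novelty is that the CGO solutions, and the estimates for their remainders, must be produced in dimension $N\geq3$. First, by Rellich's lemma and unique continuation, $u_1^\infty=u_2^\infty$ on $\s^{N-1}$ forces $u_1=u_2$ on the unbounded connected component $G$ of $\R^N\setminus\overline{D_1\cup D_2}$. Suppose $D_1\neq D_2$. Since $D_1,D_2$ are boxes, hence convex hulls of finitely many vertices, either some vertex of $D_1$ lies outside $\overline{D_2}$ or some vertex of $D_2$ lies outside $\overline{D_1}$ (otherwise $\overline{D_1}\subseteq\overline{D_2}$ and, symmetrically, $\overline{D_2}\subseteq\overline{D_1}$, giving $D_1=D_2$). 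After relabelling, fix a vertex $O$ of $D_1$ with $O\notin\overline{D_2}$ and a ball $B=B(O,r_0)$ so small that $\overline B\cap\overline{D_2}=\emptyset$ and $\overline{D_1}\cap B=\overline C\cap B$, where $C$ is the open corner cone of $D_1$ at $O$; after a rigid motion, $O=0$ and $C=(0,\infty)^N$. Then $B\setminus\overline C\subset G$, so $u_1=u_2$ there, and together with the transmission conditions \eqref{TE} this shows: $w:=u_2|_B$ solves $(\Delta+k^2)w=0$ in $B$ (hence is real-analytic), $u:=u_1|_{C\cap B}$ solves $(\Delta+k^2q_1)u=0$, both have the same Cauchy data on $\Gamma:=\partial C\cap B$, $q_1|_{\overline C\cap B}$ enjoys the regularity $X$, and $q_1(0)\neq1$. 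Moreover $w\not\equiv0$: otherwise unique continuation gives $u_2\equiv0$, so $u_2^{\mathrm{sc}}=-u^{\mathrm{in}}$ would satisfy \eqref{eq:radiation}, impossible for a nontrivial entire Helmholtz solution; hence $w$ vanishes to some \emph{finite} order at $0$.

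Set $v:=u-w$, so $(\Delta+k^2q_1)v=-k^2(q_1-1)w$ in $C\cap B$, equivalently $(\Delta+k^2)v=-k^2(q_1-1)(v+w)$, with $v=\partial_\nu v=0$ on $\Gamma$. We claim that $w$ vanishes to infinite order at $0$, which contradicts the previous paragraph; this is proved by induction on the vanishing order. So assume $w=O(\abs x^m)$ near $0$ and write $w=P_m+O(\abs x^{m+1})$ with $P_m$ homogeneous of degree $m$; we must show $P_m\equiv0$. For large $\tau>0$ construct a CGO solution $u_\tau=e^{\rho\cdot x}(1+\psi_\tau)$ of $(\Delta+k^2)u_\tau=0$, where $\rho\in\C^N$ satisfies $\real\rho=\tau\mu$ with $\mu=-(1,\dots,1)/\sqrt N$, and $\rho\cdot\rho=-k^2$ (so $\ima\rho\perp\mu$ and $\abs{\ima\rho}=\sqrt{\tau^2+k^2}$), and $\psi_\tau\to0$ in a suitable norm. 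Since $\mu\cdot x\leq-\abs x/N$ for $x\in C$, we have $\abs{e^{\rho\cdot x}}\leq e^{-\tau\abs x/N}$ on $C$. Testing the equation for $v$ against $u_\tau$ over $C\cap B_h$ ($h$ small but fixed) and integrating by parts twice, the boundary terms on $\Gamma\cap B_h$ vanish by the zero Cauchy data of $v$, those on $C\cap\partial B_h$ are $O(\tau e^{-\tau h/N})$, and $C\cap B_h$ may be replaced by $C$ up to an exponentially small error, leaving
\[
k^2\int_C(q_1-1)\,w\,u_\tau\,dx \;=\; -k^2\int_C(q_1-1)\,v\,u_\tau\,dx \;+\; O(\tau e^{-\tau h/N}).
\]
Substituting $w=P_m+O(\abs x^{m+1})$ and $q_1=q_1(0)+(q_1-q_1(0))$, and using $\int_C x^\gamma e^{\rho\cdot x}\,dx=\prod_j\gamma_j!\,(-\rho_j)^{-\gamma_j-1}$, the leading part of the left-hand side is $(q_1(0)-1)\int_C P_m(x)e^{\rho\cdot x}\,dx=\tau^{-N-m}\,\Phi_m(\tau)$, where $\Phi_m(\tau)$ converges as $\tau\to\infty$ to a number depending on the limiting direction $\omega$ of $\ima\rho$ (with $\omega\perp\mu$, $\abs\omega=1$); this number, as a function of $\omega$, does not vanish identically when $P_m\not\equiv0$ --- an elementary algebraic fact for the product cone $C$, cf.\ \cite{BLS,PSV} --- so we may fix $\omega$ with $\Phi_m(\tau)\to\Phi_m^\infty\neq0$, whence the main term is $\asymp\tau^{-N-m}$.

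It remains to bound all remaining contributions by $o(\tau^{-N-m})$. The term arising from $q_1-q_1(0)$ is $O(\tau^{-N-m-\alpha})$ in case (a), with the analogous Sobolev power gain in case (b); the term from $w-P_m$ is $O(\tau^{-N-m-1})$; the CGO remainder $\psi_\tau$ contributes $o(\tau^{-N-m})$ by a Faddeev/Sylvester--Uhlmann resolvent estimate together with the concentration of $e^{\rho\cdot x}$ at spatial scale $\tau^{-1}$; and the term $\int_C(q_1-1)\,v\,u_\tau$ is controlled using the vanishing Cauchy data of $v$ (so that its extension by zero across $\Gamma$ lies in $H^2_{\mathrm{loc}}(B)$), interior elliptic regularity, quantitative unique continuation near the corner, and the induction hypothesis $w=O(\abs x^m)$, giving a bound $o(\tau^{-N-m})$. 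Dividing by $\tau^{-N-m}$ and letting $\tau\to\infty$ forces $\Phi_m^\infty=0$, i.e.\ $P_m\equiv0$; this closes the induction, so $w$ vanishes to infinite order at $0$, contradicting the first paragraph. Hence $D_1=D_2$.

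The main obstacle is this last step: producing CGO solutions for the low-regularity potential $k^2q_1$ with a sufficiently rapidly decaying remainder, and --- above all --- bounding the $v$-term near the corner by $o(\tau^{-N-m})$ at each stage of the induction. The competition between this bound and the main term $\tau^{-N-m}$ is precisely what fixes the admissible regularity of $q_1$: in $\R^3$ a direct CGO remainder estimate gives the threshold $\alpha>1/4$ for $X=C^\alpha$, while for general $N\geq3$ the $L^p$ Sylvester--Uhlmann theory (valid for $1<p\leq2$), combined with the embedding $H^{s,p}\hookrightarrow C^0$ for $s>N/p$, gives case (b). These same facts explain why only $N\geq3$, and only rectangular (that is, convex, product) corner cones --- on which the linear CGO phase decays at the rate $\abs x$ --- are treated here; the planar convex case is Theorem~\ref{shape-identification-in-2D}. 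Finally, since the whole argument is confined to the single ball $B$, the regularity of $q_1$ is only needed near the vertices, which yields the byproduct mentioned in the abstract.
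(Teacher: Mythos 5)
Your overall strategy (Rellich's lemma, localization at a vertex of one box lying outside the other, testing against CGO solutions, and reading off the vanishing of the Laplace transform of the leading homogeneous Taylor term over the corner cone) is the same as the paper's, and the reduction in your first paragraph is correct. But there is a genuine gap at the central analytic step, and it comes from your choice of CGO solution. You construct $u_\tau$ solving the \emph{free} equation $(\Delta+k^2)u_\tau=0$ and then test $(\Delta+k^2)v=-k^2(q_1-1)(v+w)$ against it. Green's identity then leaves you with the extra volume term $\int_C(q_1-1)\,v\,u_\tau\,dx$, where $v=u_1-u_2$ is only known to be in $H^2$ with vanishing Cauchy data on the two faces. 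To beat the main term you need this to be $o(\tau^{-N-m})$, which amounts to showing that $v$ vanishes to order greater than $m$ at the corner. Your appeal to ``interior elliptic regularity, quantitative unique continuation near the corner, and the induction hypothesis'' does not deliver this: the crude bound $\|v\|_{L^\infty}\int_C e^{-\tau|x|/\sqrt N}\,dx=O(\tau^{-N})$ already swamps the main term for every $m\geq1$, and upgrading it requires a corner asymptotic expansion of $v$ (Kondratiev/Taylor-type analysis of the overdetermined Cauchy problem in the cone). That is exactly the route of Elschner--Hu that the paper explicitly rejects because it forces $C^\infty$ (in fact real-analytic) regularity of $q$ and cannot be run under the hypotheses $C^\alpha$ or $H^{s,p}$. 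The paper sidesteps the term entirely: its CGO solutions (Lemmas \ref{lem3.1} and \ref{lem3.1_second}) solve the \emph{perturbed} equation $\Delta u+k^2\widetilde q\,u=0$ for a suitable extension $\widetilde q$ of $q$, so that Green's identity transfers $\Delta+k^2q$ onto $w=v_1-v_2$ and produces the orthogonality relation $\int_{S_{R/2}}(q-1)v_1\,u\,dx=O(e^{-\tau\delta_0})$ in which only the real-analytic free solution $v_1$ appears. The price is that one must construct CGO solutions for a rough potential supported in a cube, and that is where Lemma \ref{cube-lemma} (the Sobolev multiplier property of $\chi_{[0,\infty[^N}$, valid up to smoothness $1/2$) and the Besov-space verification in Lemma \ref{lem3.1_second} enter; these are precisely the ingredients your proposal defers to ``the main obstacle'' without resolving. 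Your closing paragraph even lists both the perturbed-potential CGO construction and the $v$-term bound as open obstacles, but these are alternatives, not both needed: with the perturbed CGO there is no $v$-term.

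Two smaller points. First, you never record that the leading homogeneous polynomial $P_m$ is \emph{harmonic} (the paper cites \cite[Lemma 2.4]{BLS} for this); harmonicity is essential because $\rho$ is constrained to the variety $\rho\cdot\rho=-k^2$, and the non-vanishing statement \cite[Theorem 2.5]{BLS} that you invoke is a statement about harmonic homogeneous polynomials and about a specific family of admissible directions $\rho=\tau(\omega+i\omega^{\perp})$ with $\omega$ ranging over a neighbourhood of the cone axis --- you should check that the set of $\rho$ on which you establish vanishing matches what that theorem requires. Second, your induction on the vanishing order is redundant: once the lowest nontrivial homogeneous term of the free solution is shown to vanish, you already have a contradiction with its nontriviality, which is how the paper (Lemma \ref{3D-lemma}) concludes in one step.
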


Theorems \ref{shape-identification-in-2D} and \ref{shape-identification-in-3D} are valid for H\"older or Sobolev potentials and avoid the real-analyticity assumption required in \cite{ElHu2015}. The results in dimensions $N \geq 3$ are confined to penetrable scatterers of rectangular type.
It is still open how to prove
Theorem \ref{shape-identification-in-3D} for general convex  polyhedra with H\"older continuous contrasts.
We remark that the above results remain valid for a large class of incident waves which do not vanish identically in a neighborhood of the scatterer. For instance, $u^{\mathrm{in}}$ is also allowed to be
a spherical point source emitted from some source position located in $D^e$.

Our technique improves the regularity conditions of the corner scattering results of \cite{BLS, PSV}. Namely, regularity is only required in a small neighborhood of the corner point, and otherwise the contrasts are only required to be $L^\infty$. We state the results on the absence of non-scattering wavenumbers as follows.
Throughout the paper we write $B_r:=\{x\in\R^N: |x| < r\}$ for $r>0$.

\begin{thm}\label{corner-scattering-in-2D}
Let $q\in L^\infty(\mathbb R^2)$, and let $W\subset\mathbb R^2$ be a closed sector with angle $< \pi$ and with vertex at $O$. Suppose that $q\equiv1$ in $W^e$, that $q-1$ is compactly supported, and that $q|_{W \cap B_r}$ is $C^{\alpha}$ for some $\alpha > 0$ and $r > 0$. Finally, assume that $q(O)\neq1$. Then, with $q$ as the contrast, for any incoming wave $u^{\mathrm{in}}\not\equiv0$, the far-field pattern $u^\infty$ can not vanish identically.
\end{thm}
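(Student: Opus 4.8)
The plan is to argue by contradiction: suppose $u^\infty\equiv0$.

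\emph{Step 1 (localisation near the vertex).} Since $q\equiv1$ in $W^e$ and $q-1$ is compactly supported, the set $\{q=1\}$ contains the connected unbounded set $W^e$; Rellich's lemma together with unique continuation for the Helmholtz equation forces $u^{\mathrm{sc}}\equiv0$ in all of $W^e$. Hence $u=u^{\mathrm{in}}$ in $W^e$, and because $u,u^{\mathrm{in}}\in H^2_{\mathrm{loc}}(\R^2)$ the Cauchy data of $v:=u-u^{\mathrm{in}}$ vanish on $\partial W$. This step needs no regularity of $q$, and it is precisely what allows the $C^\alpha$ hypothesis to be imposed only near $O$. Fix $h\in(0,r)$ and put $D_h=W\cap B_h$, $\Gamma_h=\partial W\cap B_h$ (the two edges). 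Subtracting $(\Delta+k^2)u^{\mathrm{in}}=0$ from $(\Delta+k^2q)u=0$ in $D_h$ gives
\[
\Delta v=-k^2(q-1)u^{\mathrm{in}}-k^2qv \quad\text{in }D_h,\qquad v,\ \partial_\nu v=0 \ \text{ on }\Gamma_h.
\]

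\emph{Step 2 (CGO pairing).} Because $W$ has opening $<\pi$, one can pick orthonormal $\nu_1,\nu_2\in\R^2$ with $\nu_1\cdot\hat x\le-c<0$ for every unit $\hat x\in W$, and set $\rho=\tau(\nu_1+i\nu_2)$, so that $\rho\cdot\rho=0$, the function $u_0:=e^{\rho\cdot x}$ is harmonic, and $\abs{u_0(x)}=e^{\tau\nu_1\cdot x}\le e^{-c\tau\abs{x}}$ on $W$. Green's identity on $D_h$ for $u_0$ and $v$ kills the edge contributions ($v$ has zero Cauchy data on $\Gamma_h$ and $\Delta u_0=0$) and yields
\[
-k^2\int_{D_h}u_0\,(q-1)\,u^{\mathrm{in}}\,dx \;=\; k^2\int_{D_h}u_0\,q\,v\,dx \;+\; \int_{W\cap\partial B_h}\!\bigl(u_0\,\partial_\nu v-v\,\partial_\nu u_0\bigr)\,ds.
\]
On the arc $W\cap\partial B_h$ one has $\abs{u_0}+\abs{\nabla u_0}\le C(1+\tau)e^{-c\tau h}$, so the last integral is $O(\tau^{-\infty})$ as $\tau\to\infty$ with $h$ fixed.

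\emph{Step 3 (leading asymptotics).} Let $P_m\not\equiv0$ be the lowest nonvanishing term in the Taylor expansion of $u^{\mathrm{in}}$ at $O$; since $(\Delta+k^2)u^{\mathrm{in}}=0$, this $P_m$ is a homogeneous polynomial of some degree $m\ge0$ which is harmonic. Using $q(O)\ne1$ and that $q$ is $C^\alpha$ at $O$, $(q-1)u^{\mathrm{in}}=(q(O)-1)P_m+O(\abs{x}^{m+\bar\alpha})$ with $\bar\alpha=\min(\alpha,1)$, and rescaling $x\mapsto x/\tau$ over the sector gives $\int_W u_0 P_m\,dx=c_W(\rho)\,\tau^{-2-m}$ with a $\tau$-independent constant $c_W(\rho)$. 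The decisive point is a \emph{non-vanishing lemma}: $\rho$ can be chosen inside the admissible family so that $c_W(\rho)\ne0$; for a planar sector of opening $\psi\in(0,\pi)$ this reduces to the elementary facts $\sin\psi\ne0$ and $\abs{U_m(\cos\psi)}\ne m+1$ for the Chebyshev polynomial $U_m$. Since replacing $D_h$ by $W$ costs only $O(e^{-c\tau h}\tau^{-2-m})$, the left side of the identity above equals $-k^2(q(O)-1)c_W(\rho)\,\tau^{-2-m}+o(\tau^{-2-m})$, of exact order $\tau^{-2-m}$.

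\emph{Step 4 (the remainder and the contradiction).} It remains to show $\int_{D_h}u_0\,q\,v\,dx=o(\tau^{-2-m})$, i.e.\ that $v$ vanishes at $O$ to order $>m$. Since $\Delta v\in L^\infty(D_h)$ and $v$ has zero Dirichlet data on the flat edges $\Gamma_h$, boundary elliptic estimates at the convex corner give $v\in C^{1,\beta}(\overline{D_{h'}})$ for some $\beta>0$; combined with the zero Cauchy data this forces $v(O)=0$ and $\nabla v(O)=0$, hence $v=O(\abs{x}^{1+\beta})$. This already closes the argument when $m=0$ — in particular for incident plane waves and point sources from $W^e$, which never vanish at $O$. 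For general $u^{\mathrm{in}}$ one bootstraps: if $v=O(\abs{x}^\sigma)$ then the right side of the equation for $\Delta v$ vanishes to order $\min(\sigma,m)$, and — using the zero Cauchy data on the edges and unique continuation (a harmonic function with zero Cauchy data on an edge is trivial) — the vanishing order of $v$ increases by roughly $2$ at each step until it exceeds $m$; then $\abs{\int_{D_h}u_0\,q\,v\,dx}\le C\int_W e^{-c\tau\abs{x}}\abs{x}^{m'}\,dx=O(\tau^{-2-m'})$ with $m'>m$. Putting the three estimates together, dividing by $\tau^{-2-m}$ and letting $\tau\to\infty$, one obtains $k^2(q(O)-1)c_W(\rho)=0$, contradicting $k>0$, $q(O)\ne1$ and $c_W(\rho)\ne0$. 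The two steps I expect to be the real work are the non-vanishing lemma for $c_W(\rho)$ and, when $u^{\mathrm{in}}$ vanishes at the vertex, the bootstrap on the vanishing order of $v=u-u^{\mathrm{in}}$.
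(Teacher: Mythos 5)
Your Steps 1--3 are sound: the Rellich/unique continuation localisation, the Green identity on $D_h$ with the decaying harmonic exponential $u_0=e^{\rho\cdot x}$, $\rho\cdot\rho=0$, and the computation of the leading term $\int_W u_0\,P_m\,dx$ (the non-vanishing of $c_W(\rho)$ for a sector of opening $\psi\in(0,\pi)$ is exactly the explicit Laplace-transform computation of Bl\aa sten--P\"aiv\"arinta--Sylvester, and your reduction to $\sin\psi\neq0$ and $\abs{U_m(\cos\psi)}<m+1$ is correct, using both admissible families of $\rho$). However, your route differs from the paper's in one structurally decisive way: the paper pairs $w=v_1-v_2$ with a CGO solution $u=e^{-\rho\cdot x}(1+\psi)$ of the \emph{perturbed} equation $\Delta u+k^2\widetilde q\,u=0$ (Lemma \ref{lem1}), so that $\int_{S_{R/2}}(\Delta u+k^2qu)w\,dx=0$ identically and the only surviving volume term is $k^2\int(q-1)v_1u\,dx$; the price is the correction $\psi$, which is controlled in $L^6$ and contributes an admissible error $O(\tau^{-m-2-\delta})$. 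Your harmonic exponential avoids constructing $\psi$ but leaves behind the term $k^2\int_{D_h}u_0\,q\,v\,dx$, and the entire difficulty of the theorem is displaced into showing that this term is $o(\tau^{-2-m})$.

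That is where the gap is. To kill $\int_{D_h}u_0\,q\,v\,dx$ you need $v=u-u^{\mathrm{in}}$ to vanish at $O$ to order strictly greater than $m$, and for $m\geq2$ your Step 4 only asserts a bootstrap (``the vanishing order increases by roughly $2$ at each step'') without proving it. This bootstrap is genuinely hard: at each step one must split $v$ into a particular solution of $\Delta v_p=f$ with $f$ merely bounded and pointwise $O(\abs{x}^{j})$ (the Newtonian potential does not vanish to order $j+2$ without subtracting a Taylor polynomial that $v_p$ need not possess when $f$ is not smooth), plus a harmonic remainder in the sector whose Cauchy data on the edges vanish to high order; one must then exclude the Kondratiev singular functions $r^{j\pi/\psi}\sin(j\pi\theta/\psi)$ using the Neumann data, with exponents that are generically irrational and possibly resonant. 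This is precisely the Taylor-expansion/corner-asymptotics strategy of Elschner--Hu, which the present paper explicitly states ``can be generalized only to infinitely smooth potentials on $\overline{S}_R$'' --- the whole point of Lemma \ref{lem2} is to avoid it. Your argument does close for $m=0$ and $m=1$ (where the $C^{1,\beta}$ corner regularity and $v(O)=0$, $\nabla v(O)=0$ already give $v=O(\abs{x}^{1+\beta})$), hence for incident plane waves and exterior point sources, but the theorem is claimed for arbitrary nontrivial incident waves, which may vanish to any finite order at the vertex. As written, Step 4 is a genuine missing ingredient rather than a routine verification; either supply the full corner-asymptotics argument with $L^\infty$ right-hand sides, or replace $u_0$ by the perturbed CGO solutions of Lemma \ref{lem1} as the paper does.
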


\begin{thm}\label{rectangular-corner-scattering-in-3D}
Let $q\in L^\infty(\mathbb R^N)$. Suppose that $q\equiv1$ in $W^e$, that $q-1$ is compactly supported, and that $q|_{W \cap B_r}$ has regularity $X$ for some $r>0$, where one of the following conditions holds:
\begin{enumerate}
\item[(a)]
$N=3$, $W=\left[0,\infty\right[^3$, and $X = C^{\alpha}$ for some $\alpha>1/4$.
\item[(b)]
$N \geq 3$, $W=\left[0,\infty\right[^N$, and $X = H^{s,p}$ for some $s, p$ with $1 < p \leq 2$ and $s > N/p$.
\end{enumerate}
Finally, assume that $q(O)\neq1$. Then, with $q$ as the contrast, for any incoming wave $u^{\mathrm{in}}\not\equiv0$, the far-field pattern $u^\infty$ can not vanish identically.
\end{thm}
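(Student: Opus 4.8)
The plan is to argue by contradiction, following the corner-scattering technique of \cite{BLS, PSV} but localised to a small ball about $O$, so that only the behaviour of $q$ in $W\cap B_r$ is used. Suppose $u^\infty\equiv0$. By Rellich's lemma together with unique continuation, $u^{\mathrm{sc}}$ vanishes on the unbounded component of $\R^N\setminus\mathrm{supp}(q-1)$. Since $q-1$ is compactly supported and $q\equiv1$ in $W^e$, we have $\mathrm{supp}(q-1)\subset\overline W$, while $\R^N\setminus\overline W$ is open, connected and unbounded, so $u^{\mathrm{sc}}\equiv0$ on $\R^N\setminus\overline W$; in particular $v:=u^{\mathrm{sc}}=u-u^{\mathrm{in}}\in H^2_{\mathrm{loc}}$ vanishes on $B_r\setminus\overline W$, hence its Cauchy data vanish on $\partial W\cap B_r$. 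Moreover $v$ solves $\Delta v+k^2qv=k^2(1-q)\,w$ in $W\cap B_r$, where $w:=u^{\mathrm{in}}$ is real-analytic and $w\not\equiv0$. Since the regularity class $X$ embeds into the continuous functions, $q$ is continuous near $O$ with $q(O)\neq1$; if $w(O)=0$ I replace $w$ by its lowest-order nonvanishing homogeneous Taylor part $p_m$ at $O$ (degree $m\ge0$), writing $w=p_m+\mathcal{O}(|x|^{m+1})$.

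For the test function I take $u_0:=e^{\rho\cdot x}$ with $\rho\in\C^N$, $\rho\cdot\rho=0$, and $\real\rho$ in the open negative orthant, so that $u_0$ is harmonic and $|u_0(x)|\le e^{-c|\rho|\,|x|}$ for $x\in W$, with $c>0$ depending only on $N$ and the direction of $\real\rho$; I then let $|\rho|\to\infty$ along this ray. (For case (b) it may be more convenient to replace $u_0$ by a genuine CGO solution $e^{\rho\cdot x}(1+\psi_\rho)$ of $\Delta+k^2$ with $\psi_\rho$ small in a suitable $L^p$-based norm; the lower-order contributions are harmless.) Green's identity on $W\cap B_r$, the vanishing Cauchy data of $v$, and the Helmholtz equation for $w$ yield
\[
 k^2\!\int_{W\cap B_r}(1-q)\,w\,u_0\,dx \;=\; k^2\!\int_{W\cap B_r}q\,v\,u_0\,dx \;+\; \mathcal{O}\!\left(|\rho|\,e^{-c|\rho|r}\right).
\]
On the left, continuity of $q$ at $O$ lets me write $(1-q)w=(1-q)(O)\,p_m+E$ with $E$ controlled by the modulus of continuity of $q$ at $O$; for the rectangular orthant the ``corner integral'' $\int_W p_m(x)e^{\rho\cdot x}\,dx$ factorises over the coordinates into an explicit rational expression, homogeneous of degree $-(N+m)$ in $\rho$, which is not identically zero on the admissible set (else $p_m\equiv0$) and can therefore be arranged nonzero, while $\int_{W\cap B_r}E\,u_0\,dx$ and the truncation error are $o(|\rho|^{-(N+m)})$. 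Hence the left-hand side has exact order $|\rho|^{-(N+m)}$.

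The heart of the matter is the term $k^2\int_{W\cap B_r}q\,v\,u_0\,dx$: since $q$ is only $L^\infty$ there, I must show it is $o(|\rho|^{-(N+m)})$. The idea is to combine the exponential concentration of $u_0$ at $O$ with quantitative decay of $v$ near the corner. Starting from $v\in H^2$ with vanishing Cauchy data on $\partial W\cap B_r$ and the equation $\Delta v=k^2(1-q)w-k^2qv$, elliptic regularity (after a bootstrap through the Sobolev scale) together with the smoothness hypothesis $X$ on $q$ should give a bound of the form $|v(x)|\lesssim|x|^{\sigma}$ near $O$ with $\sigma$ large enough, or equivalently a weighted estimate $\int_{W\cap B_r}|x|^{-2\sigma}|v|^2\,dx<\infty$; feeding this into $\int_{W\cap B_r}|v|\,|u_0|\,dx$ and scaling $x=y/|\rho|$ produces the required decay. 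Then the displayed identity forces $|\rho|^{-(N+m)}\lesssim o(|\rho|^{-(N+m)})$ as $|\rho|\to\infty$, a contradiction, so $u^\infty\not\equiv0$.

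I expect the decay estimate for $v$ to be the main obstacle. In $\R^2$ the domain $W\cap B_r$ is Lipschitz and a Hardy inequality makes $v\in H^2$ with vanishing Cauchy data already sufficient; but a rectangular corner in $\R^N$ with $N\ge3$ is not Lipschitz at its vertex, so the relevant boundary regularity and Hardy-type inequalities are not standard, and they appear to be available only by exploiting the product structure of the box --- this is where the restriction to rectangular scatterers enters, together with the extra smoothness of $q$ (and, if the incident wave vanishes to high order at $O$, the need to propagate that vanishing into $v$), out of which the thresholds $\alpha>1/4$ in (a) and $s>N/p$ in (b) are extracted. A minor additional point is to verify, for every incident wave, the existence of an admissible $\rho$ for which the corner integral of $p_m$ does not vanish; this is elementary for boxes but unclear for general convex polyhedra, which is the reason the higher-dimensional statement is confined to rectangular boxes.
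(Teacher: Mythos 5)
Your setup (Rellich's lemma, reduction to a local transmission problem at the corner $O$ with $v=u^{\mathrm{sc}}$ having vanishing Cauchy data on $\partial W\cap B_r$ and $\Delta v+k^2qv=k^2(1-q)u^{\mathrm{in}}$) matches the paper, and your analysis of the leading term $\int_W p_m\,e^{\rho\cdot x}\,dx$ for the orthant is sound. But the step you yourself flag as ``the heart of the matter'' is a genuine gap, and it is not a technicality that bootstrapping will fix. Because you test against a \emph{harmonic} exponential $u_0=e^{\rho\cdot x}$ (or a free-Helmholtz CGO), Green's identity leaves you with the term $k^2\int_{W\cap B_r}q\,v\,u_0\,dx$, and after the scaling $x=y/\left|\rho\right|$ this is $o(\left|\rho\right|^{-(N+m)})$ only if $v(x)=\mathcal O(\left|x\right|^{\sigma})$ near $O$ with $\sigma>m$. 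There is no mechanism producing such decay: $m$ is the vanishing order of $u^{\mathrm{in}}$ at $O$ and can be arbitrarily large, while the vanishing order of $v$ at the corner is governed by fixed geometric data (essentially the first Dirichlet eigenvalue of the spherical cap $W\cap\s^{N-1}$) together with the regularity of $q$; the hypothesis $v\in H^2$ with vanishing Cauchy data only yields a H\"older-type bound $\left|v(x)\right|\lesssim\left|x\right|^{1/2}$ via Sobolev embedding, which fails already for $m\geq1$. Establishing higher-order vanishing would require corner regularity theory for the scattered field with merely H\"older $q$ --- this is exactly the obstruction that forces the Taylor-expansion approach of \cite{ElHu2015} to assume (real-analytic or) very smooth potentials, and it is what the present paper is designed to avoid.

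The paper's route eliminates the offending term altogether: the test function is taken to be a CGO solution of the \emph{perturbed} equation $\Delta u+k^2\tilde q\,u=0$, where $\tilde q$ extends $q$ from a neighborhood of the corner (Lemmas \ref{lem3.1} and \ref{lem3.1_second}). Applying Green's formula to $w=u^{\mathrm{in}}-u_{\mathrm{tot}}$, which satisfies $\Delta w+k^2qw=k^2(q-1)u^{\mathrm{in}}$, against such a $u$ makes the $\int q\,w\,u$ contributions cancel identically, leaving only $k^2\int_{S_{R/2}}(q-1)u^{\mathrm{in}}u\,dx$ plus exponentially small boundary terms over $W\cap\partial B_{R/2}$; no decay of $u^{\mathrm{sc}}$ at the corner is ever needed. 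The thresholds $\alpha>1/4$ in (a) and $s>N/p$ in (b) do not come from a Hardy-type or corner-regularity estimate for $v$, as you conjecture, but from the construction of these Schr\"odinger CGOs: one must verify that $\chi_W(1-\tilde q)$ is a pointwise Sobolev multiplier (via the Kenig--Ruiz--Sogge uniform Sobolev inequalities and the fact that $\chi_Q\in H^{\tau,p}$ for $\tau<1/2$, Lemma \ref{cube-lemma}), respectively that $\widehat{f\chi_K}\in L^1\cap L^r$ under the $H^{s,p}$, $s>N/p$ hypothesis. To repair your argument you should replace your harmonic test function by these perturbed CGOs; the rest of your outline then goes through essentially as in Lemma \ref{3D-lemma}.
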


The subsequent Section \ref{Proof-2D} is devoted to the proofs of the two-dimensional results, i.e., Theorems \ref{shape-identification-in-2D} and \ref{corner-scattering-in-2D}. The unique determination of a rectangular box in any dimension $N \geq 3$, Theorem \ref{shape-identification-in-3D}, will be proved in Section \ref{Proof-3D}. The result of Theorem \ref{rectangular-corner-scattering-in-3D} on non-scattering wavenumbers can be derived by using the same argument as in Theorem \ref{corner-scattering-in-2D} and we omit its proof.

\section{Proofs in two dimensions}\label{Proof-2D}

Denote by $(r,\varphi)$ the polar coordinates in $\R^2$, and by  $B_R$ the disk centered at the origin $O$ with radius $R>0$. For $\varphi_0\in(0,\pi/2)$,  define $W\subset \R^2$ as the infinite sector between the half-lines $\Gamma^\pm:=\{(r,\varphi):\varphi=\pm\varphi_0\}$. The closure of $W$ will be denoted by $\overline{W}$, which is a closed cone in $\R^2$. Set (see Figure \ref{fig-1})
\ben
S_R=W\cap B_R,\quad \Gamma^\pm_R=\Gamma^\pm\cap B_R,\quad \overline{S}_R=\overline{W}\cap B_R,\quad S_R^e=B_R\backslash\overline{S}_R.
\enn

\begin{figure}[htbp]
\centering
\begin{center}
\scalebox{0.25}{\includegraphics{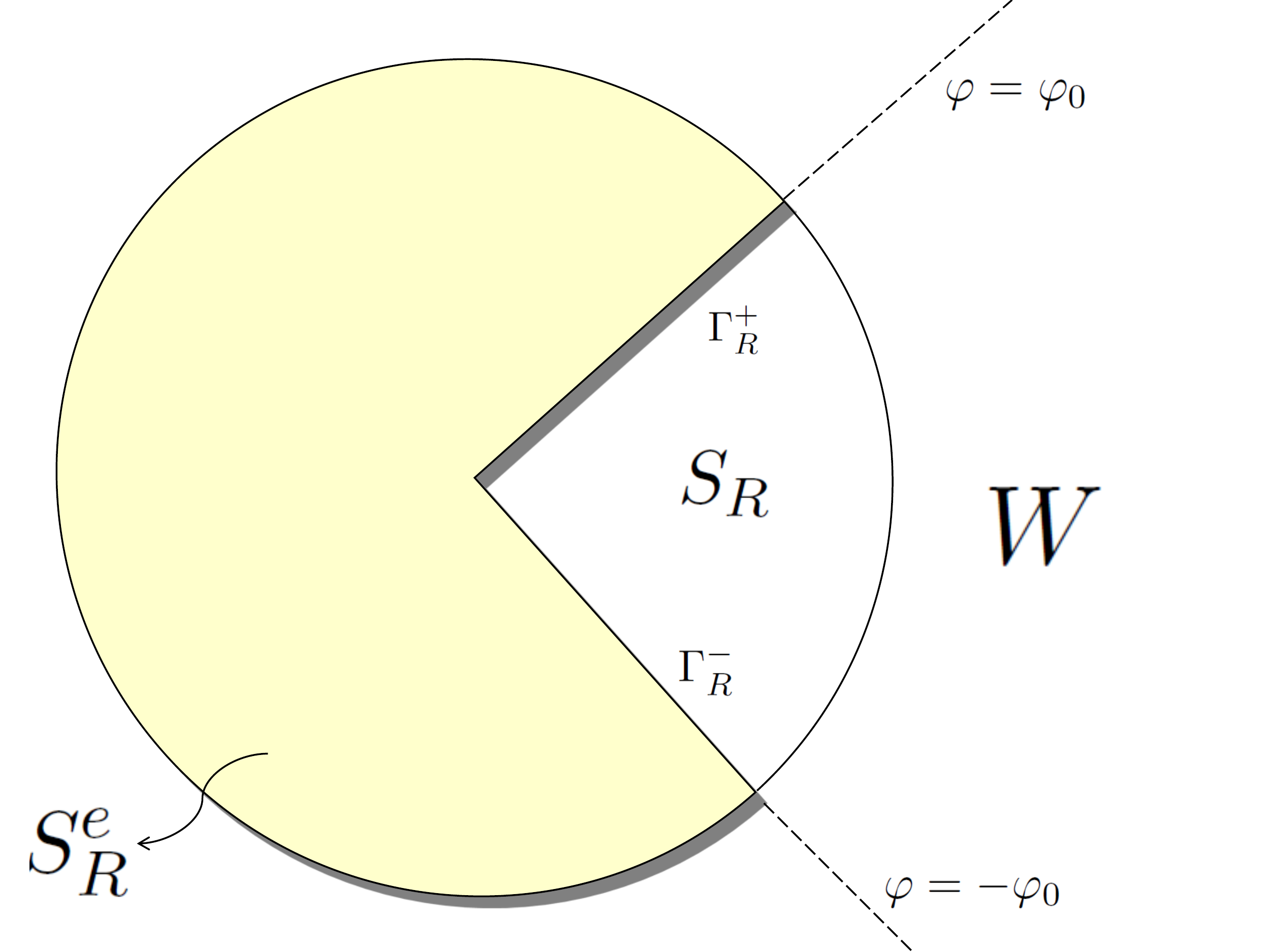}}
 \caption{ Geometrical settings.}
\label{fig-1}
\end{center}
\end{figure}

The following two lemmas are the essential ingredients in the proofs. The first one concerns the construction of suitable  Complex Geometrical Optics (CGO) solutions to the Schr\"odinger equation in $\R^2$. For convenience we employ the common notation $\left\langle x\right\rangle:=(1+|x|^2)^{1/2}$ throughout the paper.
\begin{lem}\label{lem1}
Let $\tilde{q} \in L^{\infty}(\R^2)$ satisfy $\tilde{q}\equiv 1$ in $\R^2\backslash\overline{W}$ and $\left\langle\cdot\right\rangle^\beta\left(\tilde{q}-1\right)\in C^{\alpha}(\overline{W})$ for some $\alpha>0$ and $\beta>5/3$.
If $\rho\in \C^2$ satisfies $\rho\cdot \rho=-k^2$ and $|\ima (\rho)|$ is sufficiently large, then there exists a solution of the Helmholtz equation
\begin{equation}\label{Equation}
\Delta u(x)+k^2 \tilde{q}(x) u(x)=0\quad \mbox{in}\quad \R^2
\end{equation} of the form
\be\label{CGO}
u=e^{-\rho\cdot x}(1+\psi(x)),
\en where $\psi$ satisfies
\be\label{psi}
\|\psi\|_{L^6(\R^2)}=\mathcal{O}(|\ima(\rho)|^{-1/3-\delta})\quad\mbox{as}\quad|\rho|\rightarrow \infty,
\en
for some $\delta>0$.
\end{lem}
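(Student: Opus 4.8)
The plan is to run the Faddeev--Sylvester--Uhlmann CGO construction, turning \eqref{Equation} into a fixed point problem for $\psi$ in $L^6(\R^2)$. Put $V:=k^2(\tilde q-1)$. By hypothesis $\langle\cdot\rangle^\beta V\in C^\alpha(\overline W)\subset L^\infty(\R^2)$, so $\abs{V(x)}\lesssim\langle x\rangle^{-\beta}$; since $\langle\cdot\rangle^{-\beta}\in L^p(\R^2)$ precisely when $p\beta>2$, the assumption $\beta>5/3$ gives $V\in L^{6/5}(\R^2)\cap L^{3/2}(\R^2)$ (and, because the inequality is strict, a little more --- see below). Substituting the ansatz \eqref{CGO} into \eqref{Equation} and using $\rho\cdot\rho=-k^2$ gives, after multiplying through by $e^{\rho\cdot x}$,
\[
 L_\rho\psi+V(1+\psi)=0,\qquad L_\rho:=\Delta-2\rho\cdot\nabla .
\]
Let $G_\rho$ be the Fourier multiplier with symbol $-(\abs\xi^2+2i\rho\cdot\xi)^{-1}$, the usual right inverse of $L_\rho$ (legitimate since $\abs\xi^2+2i\rho\cdot\xi$ vanishes only on a set of measure zero, and $V(1+\psi)\in L^{6/5}\subset\mathcal S'$). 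Then $\psi$ must solve the Lippmann--Schwinger-type equation
\[
 (I-T_\rho)\psi=f_\rho,\qquad T_\rho\psi:=-G_\rho(V\psi),\qquad f_\rho:=-G_\rho V ,
\]
and the lemma reduces to solving this in $L^6(\R^2)$ with the quantitative bound \eqref{psi}.

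The crux is a norm estimate for $G_\rho$. I would use that, for $\rho\cdot\rho=-k^2$ and $\abs{\ima\rho}$ large (note $\abs{\ima\rho}\to\infty$ forces $\real\rho\perp\ima\rho$ and $\abs{\real\rho}\approx\abs{\ima\rho}$),
\begin{equation}\label{eq:Grho-bound}
 \|G_\rho g\|_{L^6(\R^2)}\ \lesssim\ \abs{\ima\rho}^{-1/3}\,\|g\|_{L^{6/5}(\R^2)} .
\end{equation}
This is the Faddeev-operator analogue of the uniform resolvent / Kenig--Ruiz--Sogge--Guti\'errez estimate at the two-dimensional Stein--Tomas endpoint $(1/p,1/q)=(5/6,1/6)$, and the exponent $1/3$ here (together with its higher-dimensional counterparts) is what ultimately governs the regularity thresholds appearing in Theorems \ref{shape-identification-in-2D}--\ref{shape-identification-in-3D}. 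Its proof is Fourier analytic: after rescaling $\xi\mapsto\abs{\ima\rho}\,\xi$ the symbol becomes $\abs{\ima\rho}^{-2}\bigl(\abs\xi^2+2i\hat\rho\cdot\xi\bigr)^{-1}$ with $\abs{\hat\rho}\approx1$, whose singular set sits at unit scale --- two points when $N=2$ --- and carries a simple zero. Splitting $G_\rho$ into a piece localised near this set and a complementary piece, the complementary piece is elliptic of order $-2$ at scale $\abs{\ima\rho}$ and contributes far more decay than required (via Young's inequality and Sobolev embedding), while near the singular set the $L^{6/5}\to L^6$ improvement together with the factor $\abs{\ima\rho}^{-1/3}$ is extracted from the simple zero by an $L^2$ restriction/extension (Stein--Tomas) bound, or equivalently by stationary phase; an alternative route is to interpolate the Agmon--H\"ormander / Sylvester--Uhlmann weighted bound $\|\langle\cdot\rangle^{-1/2-\epsilon}G_\rho\langle\cdot\rangle^{1/2+\epsilon}\|_{L^2\to L^2}\lesssim\abs{\ima\rho}^{-1}$ against a dispersive $L^{6/5}\to L^6$ endpoint. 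This harmonic-analysis step is where essentially all of the work lies; it is also why the case $N=2$ is harder than $N\ge3$ --- the characteristic variety of $L_\rho$ is of smaller dimension, so there is less room in the estimates.

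Granting \eqref{eq:Grho-bound}, the rest is routine. By H\"older, $\|V\psi\|_{L^{6/5}}\le\|V\|_{L^{3/2}}\|\psi\|_{L^6}$, so \eqref{eq:Grho-bound} yields $\|T_\rho\|_{L^6\to L^6}\lesssim\abs{\ima\rho}^{-1/3}$, and likewise $\|f_\rho\|_{L^6}\lesssim\abs{\ima\rho}^{-1/3}\|V\|_{L^{6/5}}$. Hence for $\abs{\ima\rho}$ large enough $\|T_\rho\|_{L^6\to L^6}\le\tfrac12$, so $I-T_\rho$ is invertible on $L^6(\R^2)$ by Neumann series, $\psi:=(I-T_\rho)^{-1}f_\rho$ exists, and $\|\psi\|_{L^6}\le2\|f_\rho\|_{L^6}\lesssim\abs{\ima\rho}^{-1/3}$. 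To gain the extra $\abs{\ima\rho}^{-\delta}$ in \eqref{psi}, one uses that $\beta>5/3$ is \emph{strict}: this leaves room in the integrability of $V$ (so that $V$, and $V\psi$, can be placed in Lebesgue spaces slightly better than $L^{6/5}$), and carrying that room through the same fixed point argument --- e.g.\ via an $L^p\to L^6$ variant of \eqref{eq:Grho-bound} with $p$ slightly larger than $6/5$, whose exponent is strictly below $-1/3$ --- improves both $\|T_\rho\|_{L^6\to L^6}$ and $\|f_\rho\|_{L^6}$ by a factor $\abs{\ima\rho}^{-\delta}$ for some $\delta>0$, giving $\|\psi\|_{L^6}\lesssim\abs{\ima\rho}^{-1/3-\delta}$. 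Finally, since $\psi\in L^6(\R^2)$ and $V,\,V\psi\in L^{6/5}(\R^2)$, the identity $L_\rho\psi=-V(1+\psi)$ holds in $\mathcal D'(\R^2)$ by construction of $G_\rho$, so $u=e^{-\rho\cdot x}(1+\psi)$ is a distributional --- hence, by interior elliptic regularity, an $H^2_{\mathrm{loc}}$ --- solution of \eqref{Equation} of the required form \eqref{CGO}--\eqref{psi}. The main obstacle, to reiterate, is \eqref{eq:Grho-bound}: it is a genuine oscillatory-integral estimate for the Faddeev Green operator, and the $L^{6/5}\to L^6$ gain with the sharp power of $\abs{\ima\rho}$ is the only nontrivial ingredient.
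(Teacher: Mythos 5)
First, for context: the paper does not actually prove this lemma --- it is quoted as the special case $N=2$ of Lemma 3.1 in [PSV], whose proof rests on Kenig--Ruiz--Sogge-type uniform Sobolev estimates. Your overall architecture (conjugating by $e^{-\rho\cdot x}$, inverting the Faddeev operator $\Delta-2\rho\cdot\nabla$ by the Fourier multiplier $-(\abs{\xi}^2+2i\rho\cdot\xi)^{-1}$, and closing a Neumann series in $L^6$ via an $L^{6/5}\to L^6$ resolvent bound with decay $\abs{\ima\rho}^{-1/3}$) is exactly the route of [PSV], and the reduction of everything to that uniform bound is correctly identified. But there is a genuine gap, and it is not only that the key oscillatory-integral estimate is asserted rather than proved (which you acknowledge): your account of where the extra $\delta$ in \eqref{psi} comes from is wrong. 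You attribute it to the strictness of $\beta>5/3$, i.e.\ to improved integrability of $V=k^2(\tilde q-1)$; but the strict inequality $\beta>5/3$ is needed merely to place $\langle x\rangle^{-\beta}$ in $L^{6/5}(\R^2)$ at all (one needs $\beta\cdot\tfrac{6}{5}>2$), and the membership $V\in L^p$ for $p$ \emph{slightly larger} than $6/5$ --- which is what your interpolation step actually uses --- already follows from $V\in L^{6/5}\cap L^\infty$ and has nothing to do with that strictness. Meanwhile your argument never uses the hypothesis $\alpha>0$, which should be a warning sign: in [PSV] the gain $\abs{\ima\rho}^{-\delta}$ is extracted precisely from the positive H\"older/Sobolev regularity of $\chi_W(\tilde q-1)$ (via the pointwise-multiplier propositions for $C^\alpha$ functions times the characteristic function of a convex cone on $H^{s,p}$ spaces, and resolvent bounds at positive smoothness). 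This is corroborated by the higher-dimensional analogue, Lemma \ref{lem3.1}, where reaching $\abs{\ima\rho}^{-3/4-\delta}$ in $L^4(\R^3)$ requires $\alpha>1/4$: the exponent visibly improves with the assumed H\"older regularity, which a purely $L^p$-based argument cannot reproduce.

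Your proposed substitute --- an $L^p\to L^6$ bound for the Faddeev operator with $p$ slightly above $6/5$ whose decay exponent is strictly below $-1/3$ --- is not justified and cannot simply be read off from scaling: the anisotropic rescaling heuristic already predicts an exponent $-2+2(\tfrac1p-\tfrac16)=-\tfrac23$ at $p=6/5$, so the stated $-1/3$ is not governed by scaling alone but by the unit-scale behaviour near the (zero-dimensional, in $\R^2$) characteristic set, and how that behaviour varies with $p$ is exactly the hard part you have not supplied. To repair the proof you should either prove the family of uniform bounds you invoke, or follow [PSV] and exploit $\alpha>0$ through the Sobolev-multiplier mechanism; as written, the $-1/3$ part of \eqref{psi} is plausible but the $-\delta$ part is unsupported, and it is precisely that $\delta$ which the application in Lemma \ref{lem2} (the bound $\eta F(\rho)\leq C\tau^{-n-2-\delta}$ versus the homogeneity $F(\rho)=\abs{\rho}^{-n-2}F(\rho/\abs{\rho})$) cannot do without.
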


Lemma \ref{lem1} is the special case $N=2$ of Lemma 3.1 in \cite{PSV}, the proof of which was based on the uniform Sobolev estimates of Kenig, Ruiz and Sogge \cite{KRS}.
Relying on the construction of CGO solutions of the form  (\ref{CGO}), we next verify a result
for the transmission problem between the Schr\"odinger
equations with constant and piecewise H\"older continuous potentials in a finite polygonal cone.

\begin{lem}\label{lem2}
Suppose $q \in L^{\infty}(B_R)$ satisfies $q|_{\overline{S}_R} \in C^{\alpha}(\overline{S}_R)$ with some $\alpha>0$, and $q\equiv 1$ in $S_R^e$.
Let $v_1, v_2\in H^2(B_R)$ be solutions to
\ben
\Delta v_1(x)+ k^2 v_1(x)=0,\qquad \Delta v_2(x)+k^2q(x) v_2(x)=0\quad \mbox{in}\quad B_R
\enn subject to the transmission conditions
\be\label{Trans}
v_1=v_2,\quad \partial_\nu v_1=\partial_\nu v_2 \quad\mbox{on}\quad \G_R^\pm\, .
\en Then we have  $v_1=v_2\equiv0$ in $B_R$ if $q(O)\neq 1$.
 \end{lem}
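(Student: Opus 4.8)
The plan is to test the transmission system against a CGO solution supplied by Lemma \ref{lem1} and extract information from the corner. First I would reduce to a local statement: since $v_1$ and $v_2$ agree to first order on $\G_R^\pm$ and $q\equiv 1$ outside $\overline S_R$, the difference $w:=v_2-v_1$ extends by zero to all of $\overline W$ (it vanishes on $\G^\pm$ and we may, after shrinking $R$, assume the supports sit inside $B_R$), and it satisfies $\Delta w + k^2 w = k^2(q-1)v_2$ in $W$ with $w=\partial_\nu w=0$ on $\G^\pm$. Next I would apply Lemma \ref{lem1} with a potential $\tilde q$ that equals $q$ near $O$ and is smoothly cut off to $1$ away from the corner (the weight $\langle\cdot\rangle^\beta(\tilde q-1)\in C^\alpha$ holds trivially since $\tilde q-1$ is compactly supported), obtaining $u_0=e^{-\rho\cdot x}(1+\psi)$ solving $\Delta u_0 + k^2\tilde q\,u_0=0$, with $\rho\cdot\rho=-k^2$, $\tau:=|\ima\rho|$ large, and $\|\psi\|_{L^6}=O(\tau^{-1/3-\delta})$. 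Here one chooses $\rho=\tau(\omega+i\omega^\perp)+O(1/\tau)$ with $\omega\in\s^1$ a \emph{nonadmissible} direction for the cone, i.e.\ $\omega\cdot x<0$ strictly on $W\setminus\{O\}$, so that $|e^{-\rho\cdot x}|=e^{-\tau\,\omega\cdot x}$ decays exponentially in $W$ away from $O$; such $\omega$ exists precisely because the opening angle $2\varphi_0<\pi$.

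Then I would pair the two equations over $S_R$: Green's identity applied to $w$ and $u_0$ gives, using the vanishing Cauchy data of $w$ on $\G^\pm_R$ and the circular part of the boundary contributing the only surviving boundary term,
\[
\int_{S_R} (q-\tilde q)\,v_2\,u_0\,dx \;+\!\int_{S_R}(\tilde q-1)\,v_1\,u_0\,dx\;=\;\text{(boundary term on }\partial B_R\cap\overline W).
\]
Taking $\tilde q=q$ on $S_R$ (legitimate after a further shrink of $R$) kills the first integral, and the second becomes $\int_{S_R}(q-1)\,v_1\,u_0\,dx$. Writing $v_1(x)=v_1(O)+(v_1(x)-v_1(O))$ and $q(x)-1=(q(O)-1)+((q(x)-1)-(q(O)-1))$, the dominant contribution is $(q(O)-1)\,v_1(O)\int_{S_R}e^{-\rho\cdot x}\,dx$. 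The integral $\int_W e^{-\rho\cdot x}dx$ over the infinite cone is an explicit convergent quantity of exact size $\asymp \tau^{-2}$ (it does not vanish because the cone is proper), while truncating to $S_R$ and replacing $\rho$ by its leading term costs only exponentially small or lower-order terms. Multiplying the identity by $\tau^{2}$ and letting $\tau\to\infty$, every error term must be shown to be $o(1)$: the $\psi$-contribution is controlled by H\"older's inequality, $\|e^{-\rho\cdot x}\psi\|_{L^1(S_R)}\lesssim \|e^{-\rho\cdot x}\|_{L^{6/5}(S_R)}\|\psi\|_{L^6}\lesssim \tau^{-5/3}\cdot\tau^{-1/3-\delta}$, which beats $\tau^{-2}$; the H\"older remainders of $q-1$ and of $v_1$ (the latter via $H^2\hookrightarrow C^{0,\gamma}$ in $\R^2$) produce gains of the form $\tau^{-2-\epsilon}$ after integrating $|x|^{\epsilon}e^{-\tau\omega\cdot x}$ over the cone; and the boundary term on $\partial B_R\cap\overline W$ is exponentially small since there $\omega\cdot x$ is bounded below by a positive constant. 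One concludes $(q(O)-1)\,v_1(O)=0$, hence $v_1(O)=0$.

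The main obstacle — and the real content beyond bookkeeping — is upgrading $v_1(O)=0$ to $v_1\equiv v_2\equiv 0$. I expect to do this by a bootstrapping/unique-continuation argument: once $v_1(O)=0$ one pulls out one more order in the Taylor expansion of $v_1$ at $O$ and, pairing against the same family of CGO solutions (possibly with a refined choice of the nonadmissible direction $\omega$ and a slightly slower exponential weight), shows successively that $\nabla v_1(O)=0$, and then that all derivatives of $v_1$ at $O$ vanish; since $v_1$ is real-analytic near $O$ (being a solution of $\Delta v_1+k^2 v_1=0$), it vanishes in a neighborhood of $O$, and then unique continuation for the Helmholtz equation forces $v_1\equiv 0$ in $B_R$, whence $\partial_\nu v_2=\partial_\nu v_1=0$ and $v_2=v_1=0$ on $\G_R^\pm$, so $v_2\equiv0$ as well. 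Making the higher-order step rigorous — keeping track of which moments $\int_W x^\gamma e^{-\rho\cdot x}dx$ survive, and ensuring the H\"older regularity of $q$ near $O$ (only $C^\alpha$, $\alpha>0$) is enough to control the error at each stage — is where the care is needed; the key robustness is that the leading cone integral is nonvanishing at every order, which is exactly the mechanism behind the corner-scattering results of \cite{BLS,PSV}.
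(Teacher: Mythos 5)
Your overall strategy is the same as the paper's: pair the difference of $v_1$ and $v_2$ (which has vanishing Cauchy data on $\G_R^\pm$) against the CGO solutions of Lemma \ref{lem1} via Green's identity, discard the arc boundary term using the exponential decay of $e^{-\rho\cdot x}$ on $\overline W\setminus\{O\}$, and reduce to the Laplace transform over the cone of the leading Taylor term of $v_1$ at $O$. Your $n=0$ step and the H\"older/$L^{6/5}$--$L^{6}$ bookkeeping are correct (modulo a sign slip: you need $\omega\cdot x>0$ on $W\setminus\{O\}$, not $<0$, for $|e^{-\rho\cdot x}|=e^{-\tau\omega\cdot x}$ to decay in the cone; such $\omega$ exist exactly because $2\varphi_0<\pi$).

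The genuine gap is in the bootstrap to higher order. Once $v_1(O)=0$, the lowest nontrivial homogeneous term $H_n$ of degree $n\ge1$ in the Taylor expansion of $v_1$ is harmonic, hence of the form $H_n=cz^n+d\bar z^n$ with $z=x_1+ix_2$ and \emph{two} independent complex coefficients (recall $v_1$ is complex-valued). A single CGO direction gives exactly one linear condition on $(c,d)$: writing $\rho\cdot x=\mu z$ with $\mu=\tau e^{-i\varphi}$, an explicit computation over the sector $|\arg z|<\varphi_0$ yields
\begin{equation*}
\int_W e^{-\mu z}H_n(x)\,dx=\frac{(n+1)!}{\mu^{n+2}}\left(c\,\sin(2\varphi_0)+d\,\frac{\sin((2n+2)\varphi_0)}{n+1}\right),
\end{equation*}
and the bracket does not depend on $\varphi$, so varying $\omega$ while keeping one fixed sign of $\omega^{\perp}$ produces no further conditions. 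Nontrivial $(c,d)$ annihilated by this single functional exist, so your claim that ``the leading cone integral is nonvanishing at every order'' does not close the argument: as described, the iteration already stalls at degree $n=1$. What is needed --- and what the paper does, following \cite[Section 5]{PSV} --- is to use \emph{both} signs $\omega^{\perp}_{\pm}$, i.e.\ both $e^{-\mu z}$-type and $e^{-\mu\bar z}$-type phases. The resulting $2\times2$ system in $(c,d)$ has determinant $\sin^2(2\varphi_0)-\sin^2((2n+2)\varphi_0)/(n+1)^2$, which is strictly positive for a convex corner since $|\sin(m\theta)|<m\sin\theta$ for $0<\theta<\pi$ and $m\ge2$; this forces $c=d=0$. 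Apart from this, your degree-by-degree iteration is equivalent to the paper's one-shot argument, which extracts the lowest-order nontrivial harmonic homogeneous polynomial via \cite[Lemma 2.4]{BLS}, uses the homogeneity $F(\rho)=|\rho|^{-n-2}F(\rho/|\rho|)$, and invokes the injectivity of the cone Laplace transform on harmonic homogeneous polynomials from \cite{PSV}; the concluding unique continuation step is the same in both.
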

 From Lemma \ref{lem2}
it follows that the Cauchy data of non-trivial solutions
of  the Schr\"odinger
equations with  constant and  piecewise H\"older continuous potentials
  cannot coincide on two intersecting lines, if the
potentials involved  do not coincide on the intersection.
The same result was verified in \cite{ElHu2015} but restricted to real-analytic potentials.
Making use of classical corner regularity results for the Laplace equation in the plane (see e.g., \cite[Chapter 1.2]{MNP}, \cite[Chapter 2]{Grisvard} or \cite[Example 16.12]{Dauge}), the approach of Taylor expansion \cite{ElHu2015} can be generalized only to infinitely smooth potentials on
$\overline{S}_R$. Hence, the above Lemma \ref{lem2} has significantly relaxed the regularity assumption used in \cite{ElHu2015}.
Below we carry out the proof of Lemma \ref{lem2} which is valid only when the corner of $S_R$ is convex, i.e., $\varphi_0<\pi/2$.

\begin{proof}[Proof of Lemma \ref{lem2}.] We shall follow the approach from \cite[Section 4]{PSV} but modified to be applicable to a polygonal convex cone with finite height.
  For clarity we divide the proof into three steps.

{\bf Step 1. Establish an orthogonality identity with an exponentially decaying remainder term.}
Set $w=v_1-v_2$. Then $w \in H^2(B_R)$, and we have
\be\label{eq:1}
\Delta w+k^2q w=k^2(q-1)v_1\quad\mbox{in}\quad B_R,\quad w=\partial_{\nu}w=0\qquad \mbox{on}\quad\Gamma^\pm_R\;.
\en
Extending $q$ from $B_{R/2}$ to $\R^2$ in a suitable way,  we get a new potential $\tilde{q} \in L^{\infty}(\R^2)$ satisfying $\tilde{q}|_{\overline{W}} \in C^{\alpha}(\overline{W})$ such that
\ben
\tilde{q}=q\quad\mbox{in}\quad \overline{S}_{R/2}\,,\qquad \tilde{q}\equiv 1\quad\mbox{in}\quad (\overline{W}\backslash \overline{S}_{R})\cup (\R^2\backslash \overline{W}).
\enn
Clearly $\tilde{q}$ fulfills the assumptions in Lemma \ref{lem1}. Set $\beta:=\pi/2-\varphi_0>0$. For any $\varphi$ with $\varphi\in ]-\beta/2,\beta/2\,[$, let
$\omega=(\cos\varphi,\sin\varphi)\in \mathbb S^1$ and let $\omega^\perp_\pm$ be the two vectors orthogonal to $\omega$, i.e.,
$\omega^\perp_\pm=\pm(-\sin\varphi,\cos\varphi)$.
For $\tau>0$, introduce the parameter-dependent vectors $\rho_{\tau,\varphi,\pm}\in \C^2$ as follows
\ben
\rho_{\tau,\varphi,\pm}=\tau \omega+i\,(\tau^2+k^2)^{1/2} \omega^\perp_\pm.
\enn Obviously, $\rho_{\tau,\varphi,\pm}\cdot \rho_{\tau,\varphi,\pm}=-k^2$ and $|\rho_{\tau,\varphi,\pm}|\sim\sqrt{2}\tau$ as $\tau\rightarrow\infty$. By Lemma \ref{lem1},  we may construct solutions to the Schr\"odinger equation (\ref{Equation}) of the form
\be\label{eq:7}
u(x)=u_{\tau,\varphi,\pm}(x)=\exp(-\rho_{\tau,\varphi,\pm}\cdot x)\left(1+\psi_{\tau,\varphi,\pm}(x)\right)\quad\mbox{in}\quad \R^2,
\en
provided $\tau>0$ is sufficiently large.
Applying Green's formula and using (\ref{eq:1}) yields
\be\nonumber
0&=&\int_{S_{R/2}} (\Delta u+k^2\tilde{q}u)w\,dx\\ \nonumber
&=&\int_{S_{R/2}} (\Delta u+k^2q u)w\,dx\\ \nonumber
&=&\int_{S_{R/2}} (\Delta w+k^2 q w)u\,dx+\int_{\partial (S_{R/2})}\left(\partial_{\nu} u\, w-\partial_{\nu}w\,u\right) ds\\ \label{eq:2}
&=&k^2\int_{S_{R/2}} (q-1)v_1\,u\,dx+\int_{\Lambda_{R/2}}\left(\partial_{\nu} u\, w-\partial_{\nu}w\,u\right) ds
\en with $\Lambda_{R/2}:=\{|x|={R/2}\}\cap W$.
Since the constructed CGO solutions decay in $\overline{W}\backslash\{O\}$,
 we shall prove that the integral over $\Lambda_{R/2}$ in (\ref{eq:2}) converges to zero exponentially fast as $\tau\rightarrow\infty$.
\begin{figure}[htbp]
\centering
\begin{center}
\scalebox{0.25}{\includegraphics{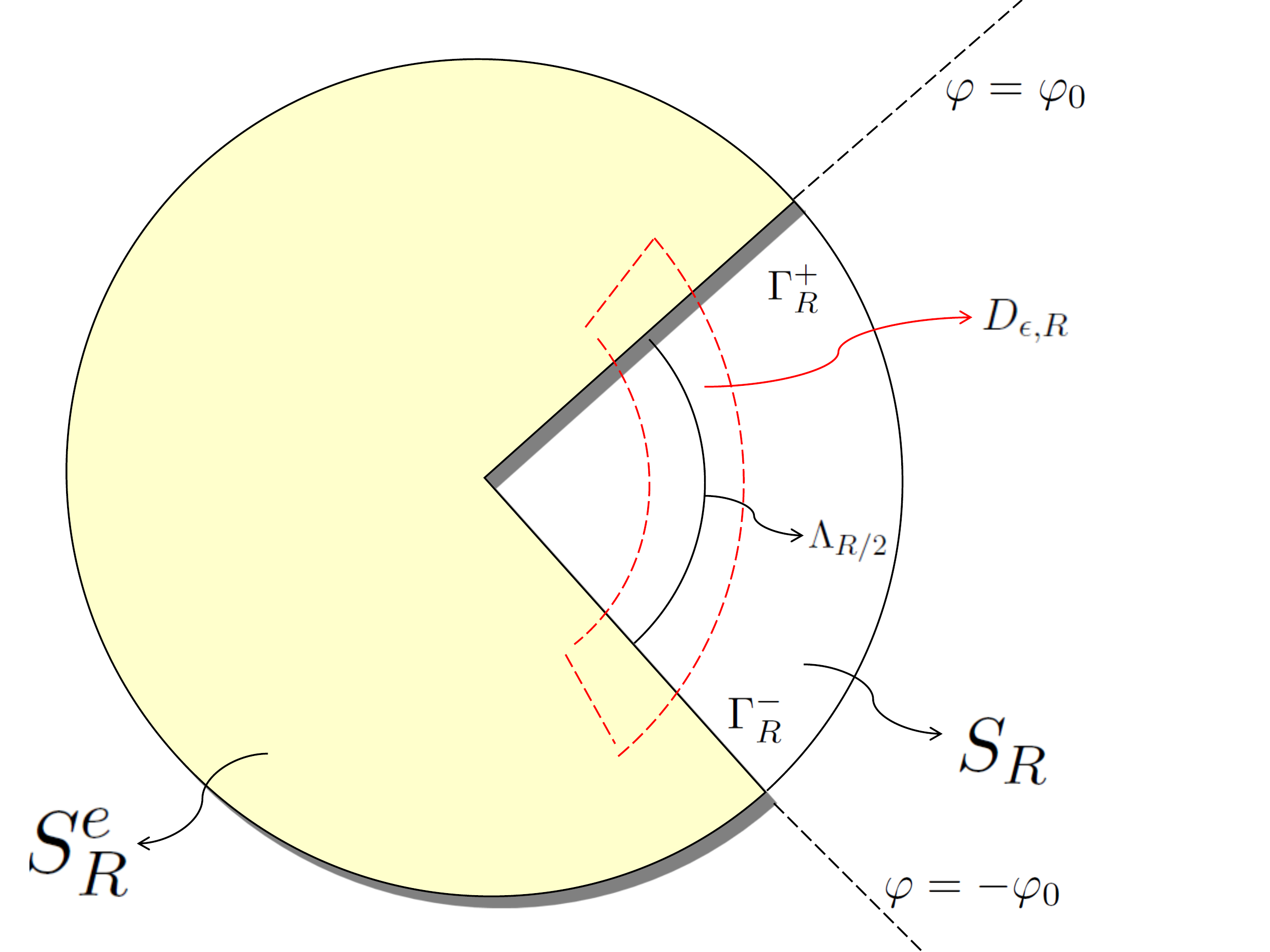}}
 \caption{ Configurations of $\Lambda_{R/2}$ and $D_{\epsilon,R}$ in the proof of Lemma \ref{lem2}.}
\label{fig-2}
\end{center}
\end{figure}

 For $0<\epsilon<\min\{\beta/2,R/2\}$, define a neighborhood of $\Lambda_{R/2}$ by (see Figure \ref{fig-2})
 $$D_{\epsilon, R}:=\{(r,\varphi): R/2-\epsilon<r<R/2+\epsilon, |\varphi|<\varphi_0+\epsilon\}.$$
 Then, there exists $\delta_0=\delta_0(\epsilon,R)>0$ such that
\ben
\real (\rho_{\tau,\varphi,\pm}\cdot x)=\tau(\omega\cdot x)\geq \tau\,\delta_0>0\quad\mbox{for all}\quad x\in D_{\epsilon, R},\quad\varphi\in]-\beta/2,\beta/2\,[.
\enn
This together with the estimates of
$\|\psi_{\tau,\varphi,\pm}\|_{L^6(\R^2)}$ (see (\ref{psi})) implies the exponential decay of the $L^2$-norm of  $u_{\tau,\varphi,\pm}$ over
$L^2(D_{\epsilon, R})$, i.e.,
\ben
\|u_{\tau,\varphi,\pm}\|_{L^2(D_{\epsilon, R})}=\mathcal{O}(e^{-\tau\delta_0})\quad \mbox{as}\quad \tau\rightarrow\infty.
\enn
On the other hand, since $u_{\tau,\varphi,\pm}$ solves the Schr\"odinger equation in $\R^2$, the standard elliptic interior regularity estimate allows us to estimate for $\epsilon'<\epsilon$ that
\ben
\|u_{\tau,\varphi,\pm}\|_{H^2(D_{\epsilon', R})}\leq C\,
\|u_{\tau,\varphi,\pm}\|_{L^2(D_{\epsilon, R})}\leq C e^{-\tau\delta_0}.
\enn
Applying the Cauchy-Schwarz inequality and using the trace lemma, we may estimate the last term on the right-hand side of (\ref{eq:2}) as follows
\ben
\int_{\Lambda_{R/2}}\left((\partial_{\nu} u)\, w-(\partial_{\nu}w)\,u\right) ds&\leq& C\, \|u\|_{H^{3/2}(\Lambda_{R/2})}\,\|w\|_{H^{3/2}(\Lambda_{R/2})}\\
&\leq& C\,\|u\|_{H^2(D_{\epsilon', R})}\left(  \|v_1\|_{H^2(B_{R})}+ \|v_2\|_{H^2(B_{R})}  \right).
\enn
Combining (\ref{eq:2}) with the previous two inequalities,  we get the following orthogonality identity over $S_{R/2}$ with an exponentially decaying remainder term
\be\label{eq:3}
\int_{S_{R/2}} (q-1)v_1\,u_{\tau,\varphi,\pm}\,dx = \mathcal{O}(e^{-\tau\delta_0}) \quad \mbox{as}\quad \tau\rightarrow \infty,\quad\varphi\in ]-\beta/2,\beta/2\,[.
\en

{\bf Step 2. Reduction to Laplace transforms.}
Assume that $v_1 \not\equiv 0$. Since $v_1$ is a solution of the Helmholtz equation in $B_{R}$, the lowest order nontrivial homogeneous polynomial $H(x)$ in the Taylor expansion of $v_1$ around the origin is a harmonic function (see \cite[Lemma 2.4]{BLS}). Without loss of generality, we assume $H$ is of order $n$ for some $n\geq 0$, i.e.,
 \be\label{eq:8}
 v_1(x)=H(x)+K(x),\qquad K(x)=\mathcal{O}(|x|^{n+1})\quad\mbox{as}\quad |x|\rightarrow 0.
 \en
 Define $F$ to be the Laplace transform of $H$ in $W$,
\be\label{F}
F(z):=\int_{W} \exp(-z\cdot x)\,H(x)\,dx,
\quad
\en for $z\in\mathbb C^2$ such that $\real (z)\cdot\left(1,0\right)>\cos(\beta/2)$.
 Taking $z=\rho=\rho_{\tau,\varphi,\pm}$ and splitting $F(\rho)$ into two terms, we see
\be\nonumber
F(\rho)&=&\int_{S_{R/2}} \exp(-\rho\cdot x)\,H(x)\,dx+\int_{W\backslash S_{R/2}} \exp(-\rho\cdot x)\,H(x)\,dx\\ \label{eq:6}
&=&\int_{S_{R/2}} \exp(-\rho\cdot x)\,H(x)\,dx+\mathcal{O}(e^{-\tau\delta_1})
\en as $\tau\rightarrow \infty$ for some $\delta_1=\delta_1(R,\varphi_0)>0$.
By the assumption $q(O)\neq 1$, we may set $\eta:=q(O)-1\neq 0$. Inserting (\ref{eq:7}) and (\ref{eq:8}) into
(\ref{eq:3}) and then combining the resulting expression with
 (\ref{eq:6}) gives
\ben
\eta\,F(\rho)=\int_{S_{R/2}} \exp(-\rho\cdot x)\,\bigl(\eta\,H(x)-(q(x)-1)(H(x)+K(x))(1+\psi(x))\bigr)\,dx+\mathcal{O}(e^{-\tau\delta_2})
\enn
as $\tau\rightarrow\infty$, with $\delta_2:=\min\{\delta_0,\delta_1\}$. Making use of \cite[Lemma 3.6]{BLS}, we can estimate the integral on the right hand of the previous identity by (see e.g., \cite[Section 4]{PSV})
\ben
&&\int_{S_{R/2}} \exp(-\rho\cdot x)\,\bigl(\eta\,H(x)-(q(x)-1)(H(x)+K(x))(1+\psi(x))\bigr)\,dx\\
&=& \int_{S_{R/2}} \exp(-\rho\cdot x)\,\left\{(q(O)-q(x)) H(x)-(q(x)-1)[K(x)+\psi(x)(H(x)+K(x))]\right\} \,dx\\
&\leq&C\,\tau^{-n-2-\delta}
\enn for some $\delta>0$. Therefore, we arrive at
\be\label{eq:9}
\eta\,F(\rho)\leq C\,\tau^{-n-2-\delta}
\en
for all $\varphi\in]-\beta/2,\beta/2\,[$ and $\tau>0$ sufficiently large.

On the other hand, since the cone $W$ remains invariant under the transform $x\rightarrow |\rho| x$ and $H$ is a homogeneous polynomial, it is easy to check that
\be\label{eq:4}
F(\rho)=|\rho|^{-n-2} \;F(\rho/|\rho|).
\en
Consequently, taking $\tau\rightarrow \infty$ in (\ref{eq:9}) gives $F((\omega+i\omega^\perp_\pm)/\sqrt{2})=0$. Moreover, the homogeneity of $F$ shown as in (\ref{eq:4}) yields
\be\label{eq:5}
F(\tau(\omega+i\omega^\perp_\pm))=0 \quad\mbox{for all}\quad \tau>0,\quad \varphi\in\;]-\beta/2,\beta/2\,[.
\en This implies the vanishing of the Laplace transform of $\chi_W H$ at $z=\tau(\omega+i\omega^\perp_\pm)$  for all $\tau>0$ and $\varphi\in\;]-\beta/2,\beta/2\,[$.

{\bf Step 3. End of the proof.}
Repeating the arguments of \cite[Section 5]{PSV}, one can deduce from (\ref{eq:5}), taking both signs $\pm$, that $H\equiv0$. This implies that $v_1\equiv 0$ in $B_R$. As a consequence, the Cauchy data of $v_2$ on $\Gamma^\pm_R$ vanish due to
the transmission conditions (\ref{Trans}). Finally, we get $v_2\equiv0$ by the unique continuation of solutions to the Schr\"odinger equation. This finishes the proof of Lemma \ref{lem2}.
\end{proof}

We are now ready to prove Theorems \ref{shape-identification-in-2D} and \ref{corner-scattering-in-2D} for general incident waves, including point source waves.
\begin{proof}[Proof of Theorem \ref{shape-identification-in-2D}.]  Since
$u_1^\infty (\hat{x})=u_2^\infty(\hat{x})$ for all $ \hat{x}\in \s^{1}$,
applying Rellich's lemma we know that $u_1^{\mathrm{sc}} = u_2^{\mathrm{sc}}$ in $\R^2\backslash(\overline{D}_1\cup\overline{D}_2)$. Thus
\be\label{eq:19}
u_1 (x)=u_2(x)
 \en
 for all $x\in \R^2\backslash(\overline{D}_1\cup\overline{D}_2)$.
\begin{figure}[htbp]
\centering
\begin{center}
\scalebox{0.35}{\includegraphics{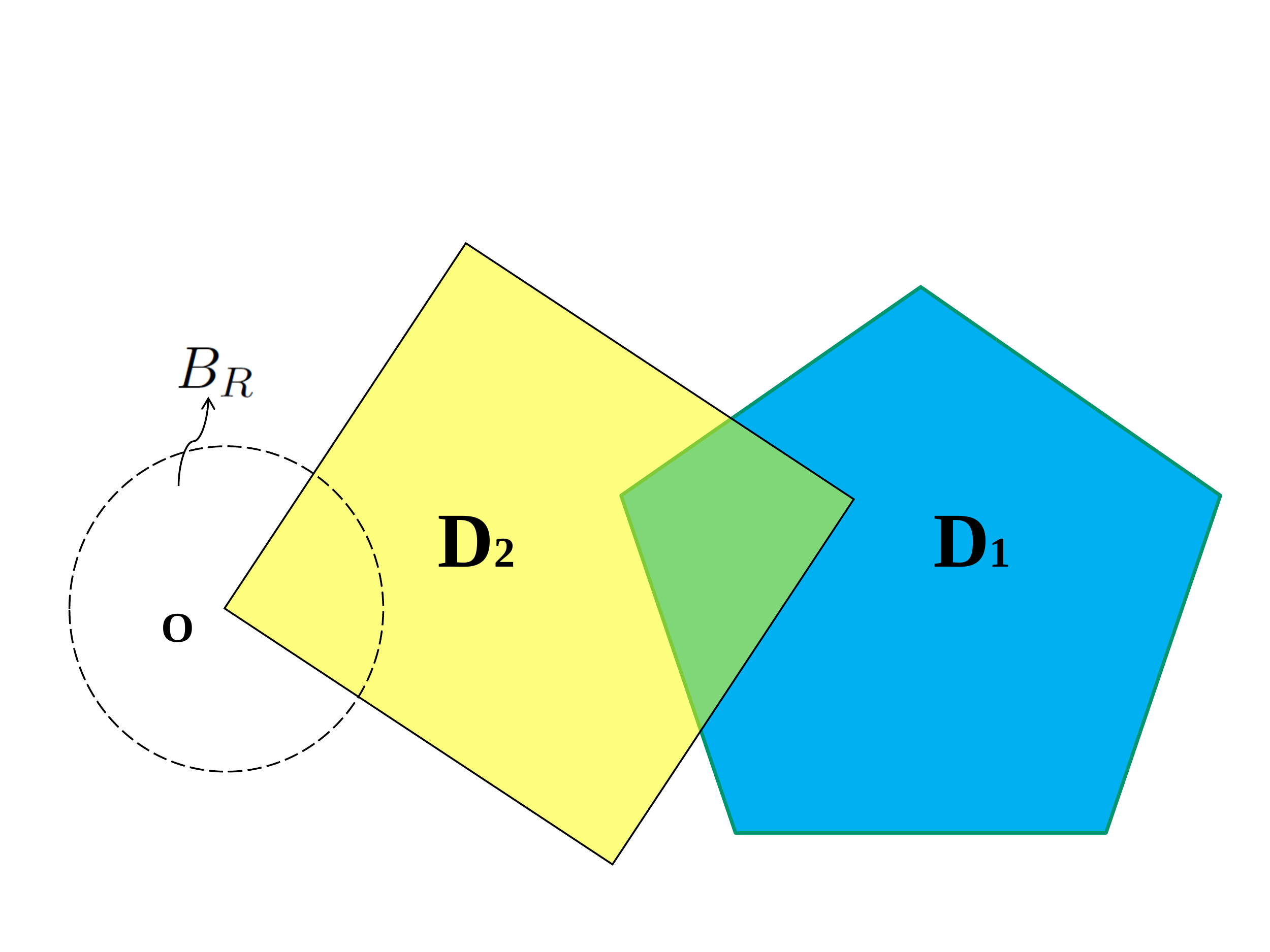}}
 \caption{Two distinct convex penetrable scatterers $D_1$ and $D_2$ of polygonal-type.}
\label{fig-3}
\end{center}
\end{figure}

If $\partial D_1\neq \partial D_2$, without loss of generality we may assume there exists a corner $O\in \R^2$ of $\partial D_2$ such that $O\notin \overline{D}_1$. We suppose further that  this corner point coincides with the origin and we pick
a fixed number $R>0$ such that $B_R\subset D_1^e$. Since $D_2$ is a convex polygon,
rotating coordinate axes if necessary, we may assume that $D_2\cap B_R=\{(r,\varphi): |\varphi|<\varphi_0\}$ for some $\varphi_0\in(0,\pi/2)$; see Figure \ref{fig-3}.
From (\ref{eq:19}), it follows that
\ben
u_1^-=u_1^+=u_2^+=u_2^-,\quad \partial_\nu u_1^-=\partial_\nu u_1^+=\partial_\nu u_2^+=\partial_\nu u_2^-\qquad\mbox{on}\quad \partial D_2\cap B_R,
\enn
where the superscripts $(\cdot)^-$, $(\cdot)^+$  stand for the limits taken from $D_2$ and $D_2^e$, respectively.
On the other hand,
 the function $u_1$ satisfies the Helmholtz equation with the wave number $k^2$ in $B_R$, while $u_2$ fulfills the Schr\"odinger equation
 \ben
 \Delta u_2+k^2 q_2 u_2=0\quad\mbox{in}\quad B_R.
 \enn
 Since $q_2(O)\neq 1$, applying Lemma \ref{lem2} leads to $u_1=u_2\equiv 0$ in $B_R$. Moreover, by unique continuation we obtain $u_1=u_2\equiv 0$ in $\R^2$. This implies that the scattered fields satisfy $u_1^{\mathrm{sc}} = u_2^{\mathrm{sc}} = -u^{\mathrm{in}}$ in all of $\R^2$. Hence $u^{\mathrm{in}}\equiv0$ in $\R^2\backslash(\overline{D}_1\cup\overline{D}_2)$, but since $u^{\mathrm{in}}$ solves the free Helmholtz equation, unique continuation implies $u^{\mathrm{in}} \equiv 0$. This contradiction gives $D_1=D_2$.
\end{proof}

\begin{proof}[Proof of Theorem \ref{corner-scattering-in-2D}.] Let us consider the incident wave $u^{\mathrm{in}}\not\equiv0$ with the total wave $u$, so that we have
\[\Delta u^{\mathrm{in}}(x)+k^2u^{\mathrm{in}}(x)=0\quad\text{and}\quad
\Delta u(x)+k^2q(x)u(x)=0\quad\text{in $\mathbb R^2$.}\]
If for this incident wave $u^\infty\equiv0$, then Rellich's lemma tells us that $u\equiv u^{\mathrm{in}}$ in $\mathbb R^2\setminus W$, and Lemma \ref{lem2} applied with $v_1 = u^{\mathrm{in}}$ and $v_2 = u$ gives $u^{\mathrm{in}} \equiv 0$ in $B_R$. By unique continuation we get $u^{\mathrm{in}} \equiv 0$, which is a contradiction.
\end{proof}

\section{Proofs in higher dimensions}\label{Proof-3D}

We first present the proof under the assumption (a) of Theorem \ref{shape-identification-in-3D}, that is, $D_j \subset \R^3$ is a rectangular box and the potential is $C^{\alpha}$ near the corners with $\alpha>1/4$. Let $W=\left[0,\infty\right[^3$. We will make use of the following result concerning complex geometrical optics solutions.
Recall the notation $\left\langle x\right\rangle:=(1+|x|^2)^{1/2}$.
 \begin{lem}\label{lem3.1}
 Let $\widetilde q\equiv1$ in $\mathbb R^3\setminus W$ and $\left\langle\cdot\right\rangle^\beta\left(\widetilde q-1\right)\in C^\alpha( W)$ for some $\alpha>1/4$ and $\beta>9/4$. If $\rho\in\mathbb C^3$ satisfies $\rho\cdot\rho=-k^2$ and $\left|\ima(\rho)\right|$ is sufficiently large, then there exists a solution of the Helmholtz equation
 \[\Delta u(x)+k^2\widetilde q(x)u(x)=0\quad\text{in}\quad\mathbb R^3\]
 of the form
 \[u=e^{-\rho\cdot x}(1+\psi(x)),\]
 where $\psi$ satisfies
 \[\left\|\psi\right\|_{L^4(\mathbb R^3)}=\mathcal O(\left|\ima(\rho)\right|^{-3/4-\delta}),\quad\text{as}\quad\left|\rho\right|\rightarrow\infty,\]
for some $\delta>0$.
\end{lem}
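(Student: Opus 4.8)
The plan is to run the complex geometrical optics construction underlying Lemma~\ref{lem1}, now in dimension three, adapting the argument of \cite[Lemma 3.1]{PSV}; the one point worth recording is that the jump of $\widetilde q$ across $\partial W$ is a bounded discontinuity, hence harmless for every $L^p$-norm and every low-regularity Besov norm that will appear. First I would reduce to an integral equation: writing $V:=k^2(\widetilde q-1)$ and substituting $u=e^{-\rho\cdot x}(1+\psi)$ into $\Delta u+k^2\widetilde q u=0$, the identity $\rho\cdot\rho=-k^2$ cancels the zeroth-order term and leaves
\[
(\Delta-2\rho\cdot\nabla)\,\psi=-V-V\psi\quad\text{in }\R^3.
\]
With $G_\rho$ the Faddeev-type Green operator --- the Fourier multiplier with symbol $(-\abs{\xi}^2-2i\rho\cdot\xi)^{-1}$ --- this becomes $(I+G_\rho M_V)\psi=-G_\rho V$, to be solved in $L^4(\R^3)$, where $M_V$ denotes multiplication by $V$.

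Two analytic inputs are needed. First, the uniform Sobolev / resolvent estimates of Kenig, Ruiz and Sogge \cite{KRS} for $G_\rho$: in $\R^3$ the dual pair $(4/3,4)$ gives
\[
\norm{G_\rho f}_{L^4(\R^3)}\le C\,\abs{\ima(\rho)}^{-1/2}\,\norm{f}_{L^{4/3}(\R^3)},
\]
(the power $\abs{\ima(\rho)}^{-1/2}$ obtained by rescaling the unit-frequency estimate for a symbol that vanishes on a codimension-$2$ sphere of radius $\sim\abs{\ima(\rho)}$), along with the analogous family for nearby exponents. Second, the decay hypothesis is precisely tuned: $\beta>9/4$ is exactly the condition $\langle\cdot\rangle^{-\beta}\in L^{4/3}(\R^3)$, so $V\in L^{4/3}(\R^3)\cap L^2(\R^3)\cap L^\infty(\R^3)$ --- mirroring the planar threshold $\beta>5/3\Leftrightarrow\langle\cdot\rangle^{-\beta}\in L^{6/5}(\R^2)$. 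Hence $\norm{G_\rho M_V}_{L^4\to L^4}\le C\abs{\ima(\rho)}^{-1/2}\norm{V}_{L^2}\le\tfrac12$ once $\abs{\ima(\rho)}$ is large, $I+G_\rho M_V$ is invertible on $L^4(\R^3)$, and $\psi:=-(I+G_\rho M_V)^{-1}G_\rho V$ solves the integral equation (so $u=e^{-\rho\cdot x}(1+\psi)$ solves the Helmholtz equation, by elliptic regularity), with $\norm{\psi}_{L^4}\le 2\norm{G_\rho V}_{L^4}$.

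It remains --- and this is the decisive step --- to sharpen $\norm{G_\rho V}_{L^4}=\mathcal O(\abs{\ima(\rho)}^{-1/2})$, which is all that pure Lebesgue estimates yield, to $\mathcal O(\abs{\ima(\rho)}^{-3/4-\delta})$; this is where the Hölder regularity is used. Decomposing $V=\sum_j P_jV$ dyadically in frequency, the hypothesis $\langle\cdot\rangle^\beta V\in C^\alpha$ gives $\norm{P_jV}_{L^{4/3}(\R^3)}\lesssim 2^{-j\alpha}$ for $2^j\gtrsim1$ (the jump across $\partial W$ only caps this at order $2^{-3j/4}$, irrelevant when $\alpha\le3/4$), whereas the multiplier of $G_\rho$ is singular only near frequencies $\sim\abs{\ima(\rho)}$. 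The dyadic pieces with $2^j\ll\abs{\ima(\rho)}$ are therefore controlled by a much better bound --- most transparently by working in the $\rho$-adapted Bourgain-type norms of Haberman--Tataru --- and only the pieces at the singular scale cost the full $\abs{\ima(\rho)}^{-1/2}$; summing and optimizing the splitting scale yields an extra gain $\abs{\ima(\rho)}^{-\gamma}$, and the bookkeeping (carried out in \cite{PSV}) is such that $\gamma$ passes $1/4$ exactly at $\alpha=1/4$. This accounts both for the stated rate and for why the threshold $\alpha>1/4$ appears here but not in the two-dimensional Lemma~\ref{lem1}, where the naive rate already suffices.

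I expect this quantitative improvement to be the main obstacle: the mere existence of the CGO solution uses only the decay $\beta>9/4$, but extracting the sharp rate $\abs{\ima(\rho)}^{-3/4-\delta}$ --- which is what the downstream corner argument (the $\R^3$ analogue of Lemma~\ref{lem2}) actually consumes --- genuinely requires the Hölder regularity together with the separation between the frequency content of $V$ and the singular frequency scale $\abs{\ima(\rho)}$ of $G_\rho$. Everything else transcribes the planar construction, and the discontinuity of $\widetilde q$ across $\partial W$ never causes difficulty because all the estimates see $V$ only through $L^p$ norms and a Besov norm of index at most $3/4$.
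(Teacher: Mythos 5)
Your overall architecture --- perturb off the PSV construction via the integral equation $(I+G_\rho M_V)\psi=-G_\rho V$, with $\beta>9/4$ giving $V\in L^{4/3}(\R^3)$ and the H\"older exponent $\alpha>1/4$ supplying exactly the gap between the naive Kenig--Ruiz--Sogge rate $|\ima(\rho)|^{-1/2}$ and the required $|\ima(\rho)|^{-3/4-\delta}$ --- matches the paper's, and the numerology you identify is correct. But the two places where you wave your hands are precisely where the content of this lemma lies, and one of them is a genuine gap. The paper's entire proof consists of verifying the single ingredient of \cite{PSV} that is sensitive to the shape of $W$: that $V=\chi_{\left[0,\infty\right[^3}(1-\widetilde q)$ is a pointwise Sobolev multiplier of positive order, i.e.\ $\|Vf\|_{H^{\alpha-\varepsilon,4/3}}\leq C\|f\|_{H^{\alpha-\varepsilon,4}}$. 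This reduces to showing $\langle\cdot\rangle^{-\gamma}\chi_{\left[0,\infty\right[^3}\in H^{\tau,p}$ for $\tau<1/2$, $1<p\leq2$, $\gamma>3/p$, which the paper proves by computing $\widehat{\chi_Q}$ explicitly for a cube, interpolating from $H^{\tau,2}$ to $H^{\tau,p}$, and decomposing the octant into dyadic cube-differences with a scaling argument. You dispose of this in a parenthetical (``the jump across $\partial W$ only caps this at order $2^{-3j/4}$''); the assertion is right in spirit, but the product estimate behind $\|P_j(\chi_W\varphi)\|_{L^{4/3}}\lesssim2^{-j\alpha}$ for $\varphi\in C^\alpha$, together with the treatment of the non-compact support via the weight, is the theorem here, not a remark.

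Second, your proposed mechanism for the gain is not the one in \cite{PSV} and is inaccurate as stated. The symbol of $G_\rho$ is not ``singular only near frequencies $\sim|\ima(\rho)|$'': its zero set is a codimension-two sphere of radius $\sim|\ima(\rho)|$ passing through the origin, so low-frequency blocks of $V$ still meet the characteristic variety. The low-frequency contribution can in fact be controlled --- near $\xi=0$ the symbol behaves like $(\rho\cdot\xi)^{-1}$ and a Hardy--Littlewood--Sobolev bound in the two singular variables gives $\|G_\rho P_jV\|_{L^4}\lesssim\tau^{-1}2^{j/2}\|P_jV\|_{L^{4/3}}$, and summing the dyadic pieces then yields $\tau^{-1/2-\alpha}$, consistent with the claim --- but that computation is absent from your writeup, and the ``bookkeeping'' you defer to \cite{PSV} is carried out there in the different framework of $H^{s,p}$-valued Kenig--Ruiz--Sogge estimates combined with the pointwise multiplier property above, not in Haberman--Tataru norms. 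Either supply the dyadic computation in full, or, as the paper does, quote PSV's Sobolev-space formulation verbatim and prove only the multiplier property for the octant.
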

\noindent The proof of the above lemma is also based on the uniform Sobolev estimates of Kenig, Ruiz and Sogge \cite{KRS}.
In order to avoid repeating the arguments presented in \cite{PSV}, we shall verify Lemma  \ref{lem3.1} by
indicating the changes necessary to the proof of \cite[Theorem 3.1]{PSV}.
For this purpose we need to know into which Sobolev spaces the characteristic function of a cube belongs.
Below we will write $\chi_Q$ for the characteristic function of a set $Q$, and denote by $C_{\mathrm c}^\infty(\R^3)$ the space of smooth functions with compact support.

\begin{lem}\label{cube-lemma}
Let $Q\subset\mathbb R^3$ be a closed cube. Then
\[\chi_Q\in H^{\tau,p}(\mathbb R^3)\]
for $\tau\in\left[0,1/2\right[$ and $p\in\left]1,2\right]$.
\end{lem}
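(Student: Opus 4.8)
\textbf{Proof plan for Lemma \ref{cube-lemma}.}
The plan is to reduce the three-dimensional statement to a one-dimensional fact by exploiting the product structure $\chi_Q(x)=\chi_{[a_1,b_1]}(x_1)\,\chi_{[a_2,b_2]}(x_2)\,\chi_{[a_3,b_3]}(x_3)$, and to prove the one-dimensional fact directly from the definition of the fractional Sobolev norm. Concretely, I would first observe that it suffices to show $\chi_{[0,1]}\in H^{\tau,p}(\R)$ for $\tau\in[0,1/2[$ and $p\in\;]1,2]$, since translations and dilations act boundedly on these spaces and a tensor product of functions lying in $H^{\tau,p}(\R)$ in each variable lies in $H^{\tau,p}(\R^3)$ (this last point can be seen either from the characterization of $H^{\tau,p}$ via the Bessel potential together with the fact that the symbol $\langle\xi\rangle^\tau$ is dominated by $\prod_j\langle\xi_j\rangle^\tau$, or, more cleanly for $p\le 2$, by interpolating the trivial $L^p$ bound with the $W^{1,p}$-type bound on finite differences that I describe next). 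For the genuinely one-dimensional estimate I would use the difference-quotient (Aronszajn--Gagliardo--Slobodeckij) description of $H^{\tau,p}=W^{\tau,p}$ on $\R$, namely that $f\in W^{\tau,p}(\R)$ iff $f\in L^p(\R)$ and
\[
\iint_{\R\times\R}\frac{\abs{f(x)-f(y)}^p}{\abs{x-y}^{1+\tau p}}\,dx\,dy<\infty,
\]
valid for $0<\tau<1$ and $1<p<\infty$ (this is standard; for $p=2$ it is the usual Fourier characterization).

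With $f=\chi_{[0,1]}$ the numerator $\abs{f(x)-f(y)}^p$ equals $1$ exactly when precisely one of $x,y$ lies in $[0,1]$ and vanishes otherwise, so the double integral reduces to
\[
2\int_{0}^{1}\int_{\R\setminus[0,1]}\frac{dy\,dx}{\abs{x-y}^{1+\tau p}},
\]
and the inner integral, after splitting into $y<0$ and $y>1$, is comparable to $x^{-\tau p}+(1-x)^{-\tau p}$ up to a constant depending only on $\tau p$. The remaining single integral $\int_0^1\bigl(x^{-\tau p}+(1-x)^{-\tau p}\bigr)\,dx$ converges precisely when $\tau p<1$, i.e.\ $\tau<1/p$; since we only claim $\tau<1/2$ and $p\le 2$ we have $\tau<1/2\le 1/p$, so convergence holds. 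Hence $\chi_{[0,1]}\in W^{\tau,p}(\R)=H^{\tau,p}(\R)$; the endpoint $\tau=0$ is trivial since $\chi_{[0,1]}\in L^p$.

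Finally I would assemble the three-dimensional conclusion. The cleanest route, given $1<p\le 2$, is to use that for $0<\tau<1$ the Gagliardo seminorm also characterizes $H^{\tau,p}(\R^3)$ (again classical, with the kernel $\abs{x-y}^{-3-\tau p}$), and to bound $\abs{\chi_Q(x)-\chi_Q(y)}$ by $\sum_{j=1}^3\abs{\chi_{[a_j,b_j]}(x_j)-\chi_{[a_j,b_j]}(y_j)}$ together with the elementary inequality $(\sum_j t_j)^p\le 3^{p-1}\sum_j t_j^p$; after this splitting each of the three resulting integrals factors, by Fubini, into the one-dimensional Gagliardo integral computed above (which is finite) times two harmless integrals of the form $\iint \abs{\chi_{[a_i,b_i]}(x_i)-\chi_{[a_i,b_i]}(y_i)}$-free pieces — one needs here the cutoff that $Q$ is bounded so that these remaining integrals are over a compact set and converge since the kernel exponent in the directions orthogonal to the jump is $-3-\tau p+2=-1-\tau p$, integrable near the diagonal in the remaining two variables after the one-dimensional singularity has been removed. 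I expect the only mildly delicate point — and thus the main obstacle — to be this bookkeeping for the mixed integrals: verifying that once one coordinate carries the $\tau p$-order singularity, integration in the other two coordinates over the bounded set $Q$ (and its complement) really does converge, which is where the hypotheses that $Q$ is a \emph{bounded} cube and that $\tau p<1$ are both genuinely used. Everything else is either a direct computation or an appeal to standard embedding/interpolation facts for Bessel potential spaces.
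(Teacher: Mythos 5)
Your route is genuinely different from the paper's. The paper computes the Fourier transform of $\chi_Q$ explicitly as a product of three sinc factors, reads off $\chi_Q\in H^{\tau,2}$ for $\tau<1/2$ from the resulting $\left\langle\xi_j\right\rangle^{-1}$ decay, and then passes to $p\in\left]1,2\right]$ by noting that multiplication by a compactly supported smooth cutoff maps $H^{k,2}\to H^{k,p}$ for $k\in\{0,1\}$ (H\"older) and interpolating. You instead work entirely on the physical side with the Gagliardo--Slobodeckij seminorm, reduce to the one-dimensional computation for $\chi_{[0,1]}$, and reassemble in $\R^3$ using the boundedness of $Q$ and the distance-to-boundary estimate; that computation is correct, and the convergence condition $\tau p<1$ you isolate is exactly right (indeed your method would yield the slightly stronger range $\tau<1/p$).

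There is, however, one step you state incorrectly and which needs repair: the identification $W^{\tau,p}(\R^n)=H^{\tau,p}(\R^n)$ is \emph{false} for fractional $\tau$ and $p\neq 2$. The Slobodeckij space is the Besov space $B^{\tau}_{p,p}=F^{\tau}_{p,p}$, while the Bessel potential space appearing in the lemma is the Triebel--Lizorkin space $H^{\tau,p}=F^{\tau}_{p,2}$, and these differ. Fortunately your argument only needs the one-directional embedding $W^{\tau,p}=F^{\tau}_{p,p}\hookrightarrow F^{\tau}_{p,2}=H^{\tau,p}$, which holds precisely because the third index is monotone and $p\leq 2$ — so the hypothesis $p\in\left]1,2\right]$ of the lemma is doing real work here, not just in making $\tau p<1$. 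With the equality replaced by this embedding (and with the final three-dimensional bookkeeping written as the single estimate $\int_Q\int_{\R^3\setminus Q}\abs{x-y}^{-3-\tau p}\,dy\,dx\leq C\int_Q \mathrm{dist}(x,\partial Q)^{-\tau p}\,dx<\infty$, which is cleaner than splitting coordinate by coordinate), your proof is complete. The paper's Fourier-side argument avoids the Besov/Triebel--Lizorkin distinction altogether, which is presumably why the authors chose it; your argument is more elementary in its computations but rests on a function-space fact that is easy to misstate.
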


\begin{proof}
 Without loss of generality, we consider $Q=\left[-1,1\right]^3$. Since $\chi_Q\in L^1(\mathbb R^3)$, the Fourier transform $\widehat{\chi_Q}$ of $\chi_Q$ is continuous. For $\xi=(\xi_1,\xi_2,\xi_3)\in\mathbb R^3$ with $\xi_1\xi_2\xi_3\neq0$, the transform $\widehat{\chi_Q}$ takes the explicit form
\[\widehat{\chi_C}(\xi)=\frac{2^3\sin\xi_1\sin\xi_2\sin\xi_3}{\xi_1\xi_2\xi_3}.\]
Thus, we may estimate
\[\left\|\chi_Q\right\|_{H^{\tau,2}(\mathbb R^3)}^2
=\int_{\mathbb R^3}\left\langle\xi\right\rangle^{2\tau}\left|\widehat{\chi_C}(\xi)\right|^2\mathrm d\xi
\leq C\,\int_{\mathbb R^3}\left\langle\xi\right\rangle^{2\tau}\left\langle\xi_1\right\rangle^{-2}\left\langle\xi_2\right\rangle^{-2}\left\langle\xi_3\right\rangle^{-2}\mathrm d\xi.\]
The last integral is finite when $\tau<1/2$. Thus, $\chi_Q\in H^{\tau,2}(\mathbb R^3)$ for $\tau\in\left[0,1/2\right[$.

Next, let $\psi$ be a fixed function in $C_{\mathrm c}^\infty(\mathbb R^3)$ satisfying $\psi\chi_Q=\chi_Q$. By H\"older's inequality, for any $k\in\left\{0,1\right\}$, $f\in H^{k,2}(\mathbb R^3)$ and fixed $p\in\left]1,2\right]$,
\[\left\|\psi f\right\|_{H^{k,p}(\mathbb R^3)}
\leq C\,\left\|\psi f\right\|_{H^{k,2}(\mathbb R^3)}
\leq C\,\left\|f\right\|_{H^{k,2}(\mathbb R^3)}.\]
By interpolation, the mapping $f\mapsto\psi f$ maps $H^{s,2}(\mathbb R^3)$ into $H^{s,p}(\mathbb R^3)$ for $s\in\left]0,1\right[$, and thus $\chi_Q\in H^{\tau,p}(\mathbb R^3)$
for all $\tau\in\left[0,1/2\right[$ and $p\in\left]1,2\right]$.
\end{proof}

The construction of CGO solutions for a cube is proved as follows.

\begin{proof}[Proof of Lemma \ref{lem3.1}]
We may assume that $\alpha<1/2$. The proof of the complex geometric optics construction in \cite{PSV} is mostly independent of the shape of $W$. For $W=\left[0,\infty\right[^3$ we only need to check that $V:=\chi_W(1-\tilde{q})$ has the pointwise Sobolev multiplier property of Proposition 3.4 in \cite{PSV}, i.e., we need to check that
\[\left\|Vf\right\|_{H^{\alpha-\varepsilon,4/3}(\mathbb R^3)}\leq C \left\|f\right\|_{H^{\alpha-\varepsilon,4}(\mathbb R^3)} \]
for some constant $C>0$ for arbitrarily small fixed $\varepsilon>0$.
The desired multiplier property in turn follows immediately, if we can show that
(cf. \cite[Proposition 3.7]{PSV})
\be\label{eq}
\left\langle\cdot\right\rangle^{-\gamma}\chi_{\left[0,\infty\right[^3}\in H^{\tau,p}(\mathbb R^3)
\en
for $p\in\left]1,2\right]$, $\tau\in\left[0,1/2\right[$ and $\gamma\in\left]3/p,\infty\right[$.
Given $\beta_1,\beta_2\in\left[0,\infty\right[$ with $\beta_1<\beta_2$, we set
$\Lambda=\left[0,\beta_2\right]^3\setminus\left[0,\beta_1\right[^3$.
Applying Lemma \ref{cube-lemma} we
know that the function $\chi_{\Lambda}$ belongs to $H^{\tau,p}(\mathbb R^3)$.
This leads to the relation (\ref{eq}) by changing variables and scaling the Sobolev norm; see
 the proof for Proposition 3.7 in \cite{PSV}.
\end{proof}

To continue the proof of Theorem \ref{shape-identification-in-3D} under the assumption (a),
we again introduce some notation. Let $S_R=W\cap B_R$, $S_R^e=B_R\setminus S_R$, and $\Gamma_R=\partial W\cap B_R$.
As in two dimensions, we will employ a result of the following type.

\begin{lem}\label{3D-lemma}
Let $q \in L^{\infty}(B_R)$ satisfy $q|_{\overline S_R} \in C^\alpha(\overline S_R)$, where $\alpha>1/4$, and  $q\equiv1$ in $S_R^e$. Let $v_1,v_2\in H^2(B_R)$ be solutions to
\[\Delta v_1(x)+k^2v_1(x)=0\quad\text{and}\quad
\Delta v_2(x)+k^2q(x)v_2(x)=0\quad\text{in $B_R$},\]
subject to the transmission conditions
\[v_1=v_2,\quad\partial_\nu v_1=\partial_\nu v_2\quad\text{on $\Gamma_R.$}\]
Then we have $v_1= v_2\equiv0$ in $B_R$, if $q(O)\neq1$.
\end{lem}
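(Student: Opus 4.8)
The plan is to run the same three-step scheme as in the proof of Lemma \ref{lem2}, with the planar sector replaced by the rectangular cone $W=\left[0,\infty\right[^3$ and the two-dimensional CGO solutions of Lemma \ref{lem1} replaced by the three-dimensional ones of Lemma \ref{lem3.1}. Set $w=v_1-v_2$; then $w\in H^2(B_R)$ solves $\Delta w+k^2qw=k^2(q-1)v_1$ in $B_R$ with $w=\partial_\nu w=0$ on $\G_R$. I would first extend $q|_{\overline S_{R/2}}$ to a potential $\widetilde q\in L^\infty(\R^3)$ with $\widetilde q|_{\overline W}\in C^\alpha(\overline W)$, $\widetilde q=q$ on $\overline S_{R/2}$ and $\widetilde q\equiv1$ outside $\overline S_R$; since $\widetilde q-1$ is compactly supported the weight condition $\left\langle\cdot\right\rangle^\beta(\widetilde q-1)\in C^\alpha(W)$ holds for every $\beta$, so Lemma \ref{lem3.1} applies to $\widetilde q$. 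The geometric point special to the cube is that the dual cone of $W=\left[0,\infty\right[^3$ is again $W$, so I will use CGO solutions $u=u_{\tau,\omega,\omega^\perp}=e^{-\rho\cdot x}(1+\psi)$ with $\rho=\rho_{\tau,\omega,\omega^\perp}=\tau\,\omega+i\,(\tau^2+k^2)^{1/2}\,\omega^\perp$, where $\omega\in\s^2$ runs over a small neighbourhood of $(1,1,1)/\sqrt3$ (so that all components of $\omega$ are positive) and $\omega^\perp$ runs over the unit circle in the plane orthogonal to $\omega$. Then $\rho\cdot\rho=-k^2$, $|\ima(\rho)|=(\tau^2+k^2)^{1/2}\to\infty$, and $\real(\rho\cdot x)=\tau\,\omega\cdot x\geq c\,\tau\,|x|$ on $W$ with $c>0$, while $\|\psi\|_{L^4(\R^3)}=\mathcal{O}(\tau^{-3/4-\delta})$.

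\textbf{Step 1 (orthogonality identity with exponentially small remainder).} Green's formula on $S_{R/2}$, using $\Delta u+k^2\widetilde q u=0$ there (where $\widetilde q=q$) and the equation for $w$, produces no boundary contribution on $\G_{R/2}\subset\partial W$ because $w$ has vanishing Cauchy data there, leaving, with $\Lambda_{R/2}:=\{|x|=R/2\}\cap W$,
\[k^2\int_{S_{R/2}}(q-1)\,v_1\,u_{\tau,\omega,\omega^\perp}\,dx+\int_{\Lambda_{R/2}}\bigl(\partial_\nu u\,w-\partial_\nu w\,u\bigr)\,ds=0.\]
Since $\real(\rho\cdot x)\geq\tau\,\delta_0>0$ on a fixed neighbourhood of $\Lambda_{R/2}$, one has $\|u\|_{L^2}=\mathcal{O}(e^{-\tau\delta_0})$ there; interior elliptic regularity upgrades this to an $H^2$ bound, and the trace theorem together with the Cauchy-Schwarz inequality gives $\int_{\Lambda_{R/2}}(\cdots)\,ds=\mathcal{O}(e^{-\tau\delta_0})\bigl(\|v_1\|_{H^2(B_R)}+\|v_2\|_{H^2(B_R)}\bigr)$, exactly as in two dimensions. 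Hence $\int_{S_{R/2}}(q-1)\,v_1\,u_{\tau,\omega,\omega^\perp}\,dx=\mathcal{O}(e^{-\tau\delta_0})$.

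\textbf{Step 2 (reduction to Laplace transforms).} Suppose $v_1\not\equiv0$ and let $H$ be the lowest-order nonzero homogeneous polynomial in the Taylor expansion of $v_1$ at $O$, necessarily harmonic by \cite[Lemma 2.4]{BLS}, of some degree $n\geq0$; write $v_1=H+K$ with $K=\mathcal{O}(|x|^{n+1})$. Let $F(z)=\int_We^{-z\cdot x}H(x)\,dx$, convergent for $\real(z)$ interior to $W$ and homogeneous of degree $-(n+3)$ (in $\R^3$ the dilation Jacobian contributes $|\rho|^{-3}$). Splitting $F(\rho)$ into its part over $S_{R/2}$ plus an $\mathcal{O}(e^{-\tau\delta_1})$ tail, inserting $u=e^{-\rho\cdot x}(1+\psi)$ and $v_1=H+K$ into the identity of Step 1, and setting $\eta:=q(O)-1\neq0$, one obtains
\[\eta\,F(\rho)=\int_{S_{R/2}}e^{-\rho\cdot x}\Bigl\{\bigl(q(O)-q(x)\bigr)H(x)-\bigl(q(x)-1\bigr)\bigl[K(x)+\psi(x)(H(x)+K(x))\bigr]\Bigr\}\,dx+\mathcal{O}(e^{-\tau\delta_2}).\]
Each term on the right is $\mathcal{O}(\tau^{-n-3-\delta'})$ for some $\delta'>0$: from $|q(O)-q(x)|\leq C|x|^\alpha$ and a Laplace-transform estimate of the type \cite[Lemma 3.6]{BLS} the first term is $\mathcal{O}(\tau^{-n-3-\alpha})$; from $|K(x)|\leq C|x|^{n+1}$ the second is $\mathcal{O}(\tau^{-n-4})$; and for the third, using $|H(x)|+|K(x)|\leq C|x|^n$ near $O$, H\"older's inequality with exponents $4/3$ and $4$, the scaling bound $\|e^{-\rho\cdot x}|x|^n\chi_{S_{R/2}}\|_{L^{4/3}(\R^3)}=\mathcal{O}(\tau^{-n-9/4})$, and $\|\psi\|_{L^4(\R^3)}=\mathcal{O}(\tau^{-3/4-\delta})$ one gets $\mathcal{O}(\tau^{-n-3-\delta})$. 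Combining with $F(\rho)=|\rho|^{-n-3}F(\rho/|\rho|)$, $|\rho|\sim\sqrt2\,\tau$, and letting $\tau\to\infty$ forces $F\bigl((\omega+i\omega^\perp)/\sqrt2\bigr)=0$, whence by homogeneity $F\bigl(s(\omega+i\omega^\perp)\bigr)=0$ for all $s>0$, all admissible $\omega$ near $(1,1,1)/\sqrt3$, and all unit $\omega^\perp\perp\omega$.

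\textbf{Step 3 and the main difficulty.} As in \cite[Section 5]{PSV}, the vanishing of the Laplace transform of $\chi_WH$ along this full family of complex rays forces $H\equiv0$, hence $v_1\equiv0$ in $B_R$; the transmission conditions then make the Cauchy data of $v_2$ on $\G_R$ vanish, and unique continuation for $\Delta v_2+k^2qv_2=0$ yields $v_2\equiv0$. I expect the crux to be the third term in Step 2: making the full remainder $o(\tau^{-n-3})$ hinges on the $L^4$-decay rate $\mathcal{O}(\tau^{-3/4-\delta})$ of $\psi$ supplied by Lemma \ref{lem3.1}, and it is precisely the construction of such CGO solutions for a cube — via the Sobolev-multiplier property and the membership $\chi_Q\in H^{\tau,p}$ for $\tau<1/2$ coming from Lemma \ref{cube-lemma} — that imposes the restriction $\alpha>1/4$. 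A secondary, structural point that must be respected is that the rectangular corner is self-dual, so that the real parts $\tau\omega$ of the admissible phases fill an open subcone and $F$ can be probed along enough complex directions for the argument of \cite[Section 5]{PSV} to conclude $H\equiv0$; this is what breaks down for a general convex polyhedral corner.
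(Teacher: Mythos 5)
Your proposal follows essentially the same three-step scheme as the paper's proof: extend $q$, probe with CGO solutions $e^{-\rho\cdot x}(1+\psi)$ whose real directions $\omega$ lie near $(1,1,1)/\sqrt{3}$ inside the self-dual cone $\left[0,\infty\right[^3$, derive the orthogonality identity with exponentially small remainder, reduce to the Laplace transform of the leading harmonic polynomial via the H\"older $4/3$--$4$ splitting and the $L^4$ decay of $\psi$, and conclude $H\equiv 0$ (your $(\tau^2+k^2)^{1/2}$ is in fact the correct normalization for $\rho\cdot\rho=-k^2$). The only substantive discrepancy is in Step 3: for the three-dimensional rectangular corner the paper invokes \cite[Theorem 2.5]{BLS} rather than \cite[Section 5]{PSV} (the latter treats strictly convex corners), but since you explicitly identify the self-duality of the cone as the structural reason the complex rays suffice, this is a citation issue rather than a gap.
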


\begin{proof}
We carry over the proof of Lemma \ref{lem2} to three dimensions.
 Set $w=v_1-v_2$. Then we have
$w=\partial_\nu w=0$ on $\Gamma_R$ and
\[\Delta w(x)+k^2q(x)w(x)=k^2(q(x)-1)v_1(x)\quad\text{in $B_R$.}\]
Extending $q$ from $B_{R/2}$ to $\mathbb R^3$, we can obtain a new potential $\widetilde q\in C^\alpha(\overline W)$ such that
$\widetilde q=q$ in $\overline S_{R/2}$ and that $\widetilde q-1$ satisfies the smoothness conditions required by
Lemma \ref{lem3.1}.
Next, write $\beta=\pi/3$ and $a=(3^{-1/2},3^{-1/2},3^{-1/2})$. Choose
 $\tau\in\mathbb R_+$ and $\omega,\omega^\bot\in\mathbb R^3$ with $\left|\omega\right|=\left|\omega^\bot\right|=1$, $\omega\cdot\omega^\bot=0$ and $\omega\cdot a>\cos(\beta/2)$. We parameterize the CGO solutions with the complex vector
\[\rho=\rho_{\tau,\omega,\omega^\bot}=\tau\omega+i(\tau^2-k^2)^{1/2}\omega^\bot.\]
 Provided that $\tau$ is sufficiently large, Lemma \ref{lem3.1} gives solutions
\[u(x)=u_{\tau,\omega,\omega^\bot}(x)=e^{-\rho\cdot x}(1+\psi(x))\]
to the Helmholtz equation
$\Delta u(x)+k^2\widetilde q(x)u(x)=0$ in $\mathbb R^3$.
Furthermore, the remainder $\psi$ has the $L^4$-estimate $\bigl\|\psi\bigr\|_{L^4(\mathbb R^3)}<C\,\tau^{-3/4-\delta}$ for some $\delta>0$.
Arguing as before in the two-dimensional case, we get
\[0=k^2\int_{S_{R/2}}(q-1)v_1u\,dx+\int_{\Lambda_{R/2}}\left( (\partial_\nu u)\,w-u\,\partial_\nu w\right)ds,\]
where $\Lambda_{R/2}=W\cap\partial B_{R/2}$. The selection of the parameters $\beta, a$ and $\omega$ ensures the decay of the integral over $\Lambda_{R/2}$,
\ben
\int_{\Lambda_{R/2}}\left((\partial_\nu u)w-u\partial_\nu w\right)ds=O(e^{-\tau\delta_0})\quad\mbox{as}\quad
\tau\rightarrow\infty
\enn
for some $\delta_0>0$.
Thus, we again get the orthogonality relation
\[\int_{S_{R/2}}(q-1)v_1u\,dx=O(e^{-\tau\delta_0})\quad\mbox{as}\quad
\tau\rightarrow\infty\]
 for any given admissible $\omega$ and $\omega^\bot$.

Denote by $H$ the lowest degree nontrivial homogeneous polynomial in the Taylor expansion of $v_1$ around the origin, and consider the
the Laplace transform $F(z)$ of $H$ (see (\ref{F})) for $z\in \C^3$ such that
 $\real (z)\cdot a>\cos(\beta/2)$. If $H$ has degree $n$, similarly as before we obtain
$F(\rho)=O(\tau^{-n-3-\delta})$
as $\tau\rightarrow\infty$, if $q(O)\neq 1$. This estimate involves using the H\"older inequality so that the $L^4$-norm of $\psi$ appears. In the other direction, by homogeneity, we have
\[F(\rho)=\left|\rho\right|^{-n-3}F\!\left(\frac\rho{\left|\rho\right|}\right).\]
 Taking $\tau\rightarrow\infty$ gives then
$F(\tau\omega+i\tau\omega^{\perp})=0$
for all $\tau\in\mathbb R_+$ and all admissible $\omega$ and $\omega^\bot$.
From \cite[Theorem 2.5]{BLS} we  obtain the conclusion $H(x)\equiv0$, which implies $v_1= v_2\equiv0$ in $B_R$.
\end{proof}

\begin{proof}[Proof of Theorem \ref{shape-identification-in-3D} under assumption (a)]
The result follows by the same arguments as in the proof of Theorem \ref{shape-identification-in-2D}, except that Lemma \ref{3D-lemma} is used instead of Lemma \ref{lem2}.
\end{proof}

Next we indicate how to prove Theorem \ref{shape-identification-in-3D} under the assumption (b). Now we let $W=\left[0,\infty\right[^N$ where $N \geq 3$. The required complex geometrical optics solutions were constructed in \cite{BLS} and they are given by the following result.

 \begin{lem}\label{lem3.1_second}
Let $\widetilde q\equiv1$ in $\mathbb R^N\setminus (W \cap B_R)$ for some $R > 0$, and let $\widetilde q\,|_{W \cap B_R}$ be in $H^{s,p}$ where $1 < p \leq 2$ and $s > N/p$. Let also $D \subset \R^N$ be a bounded open set, and let $2 \leq r < \infty$. If $\rho\in\mathbb C^N$ satisfies $\rho\cdot\rho = 0$ and $\left|\ima(\rho)\right|$ is sufficiently large, then there exists a solution of the Helmholtz equation
 \[\Delta u(x)+k^2\widetilde q(x)u(x)=0\quad\text{in}\quad D\]
 of the form
 \[u=e^{-\rho\cdot x}(1+\psi(x)),\]
 where $\psi$ satisfies
 \[\left\|\psi\right\|_{L^r(D)}=\mathcal O(\left|\ima(\rho)\right|^{-1}),\quad\text{as}\quad\left|\rho\right|\rightarrow\infty.\]
\end{lem}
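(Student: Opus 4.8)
The plan is to establish Lemma~\ref{lem3.1_second} by adapting the complex geometrical optics construction of \cite{BLS} to the present setting, where the only novelty is that the contrast $\widetilde q-1$ is supported in the box-like region $W\cap B_R$ and has the Sobolev regularity $H^{s,p}$ with $s>N/p$. First I would recall that, for $\rho\in\mathbb C^N$ with $\rho\cdot\rho=0$, conjugating the Helmholtz operator $\Delta+k^2\widetilde q$ by $e^{\rho\cdot x}$ produces the operator $\Delta+2\rho\cdot\nabla+k^2\widetilde q$, so that seeking $u=e^{-\rho\cdot x}(1+\psi)$ amounts to solving
\[
(\Delta+2\rho\cdot\nabla)\psi = -k^2(\widetilde q-1)(1+\psi) - k^2(\widetilde q-1)
\]
in $D$, i.e.\ $\psi = -k^2 G_\rho\bigl((\widetilde q-1)(1+\psi)\bigr)$ where $G_\rho$ is the Faddeev-type solution operator inverting $\Delta+2\rho\cdot\nabla$. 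The standard resolvent estimates (going back to Sylvester--Uhlmann and refined in \cite{KRS}) give $\|G_\rho f\|_{L^r(\R^N)}\leq C|\ima(\rho)|^{-1}\|f\|_{L^{r'}(\R^N)}$ for suitable conjugate exponents, and more generally bounds on weighted or Sobolev-type spaces. The construction of $\psi$ then proceeds by a Neumann series / contraction argument in an appropriate function space.

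The key point to verify is that multiplication by $V:=\widetilde q-1$ is a bounded operator between the relevant spaces with small norm as $|\ima(\rho)|\to\infty$. Here I would use that $V\in H^{s,p}$ with compact support and $s>N/p$; by the fractional Leibniz rule (Kato--Ponce) and Sobolev embedding, $H^{s,p}$ with $s>N/p$ is an algebra that acts boundedly by multiplication on a scale of Sobolev spaces, in particular $\|Vf\|_{H^{-\sigma,r'}}\leq C\|V\|_{H^{s,p}}\|f\|_{H^{-\sigma,r}}$ for a small $\sigma\geq 0$ and appropriate $r$. Composing this with the gain of $|\ima(\rho)|^{-1}$ from $G_\rho$ (mapping back into $L^r$ or $H^{\sigma,r}$), we obtain that the fixed-point map $\psi\mapsto -k^2 G_\rho(V(1+\psi))$ is a contraction on $L^r(D)$ for $|\ima(\rho)|$ large, with the source term $-k^2 G_\rho V$ of size $\mathcal O(|\ima(\rho)|^{-1})$. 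The unique fixed point then gives $\|\psi\|_{L^r(D)}=\mathcal O(|\ima(\rho)|^{-1})$, which is the claimed estimate; since $D$ is bounded the $L^r(D)$ bound is immediate from an $L^r(\R^N)$ or weighted bound on $\psi$.

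Two technical wrinkles need care, and the multiplier estimate is the main obstacle. First, the hypothesis $1<p\leq 2$ is exactly what makes the Kenig--Ruiz--Sogge uniform Sobolev estimates available for the required exponent pair, so the choice of the intermediate space in which to run the contraction must be compatible simultaneously with the mapping properties of $G_\rho$ and with the multiplier bound for $V$; pinning down this space (and checking $s>N/p$ suffices for the needed fractional-differentiation estimate on $V(1+\psi)$, including the low-regularity factor $\psi\in L^r$) is where the real work lies. Second, one must ensure the constructed $u$ solves the equation in the prescribed bounded set $D$ rather than all of $\R^N$; this is harmless since $V$ is compactly supported and one simply restricts, but it should be stated. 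Since all of these ingredients are already present in \cite{BLS}, I would conclude by noting that the proof is identical to that reference once the multiplier property of $H^{s,p}$-functions supported in $W\cap B_R$ is in hand, and refer the reader there for the remaining details rather than reproducing the contraction argument in full.
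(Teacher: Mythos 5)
Your overall architecture (conjugation by $e^{\rho\cdot x}$, a Faddeev-type solution operator, a contraction argument, and the observation that the multiplication property of the potential is the crux) matches the spirit of the construction, and you correctly flag the multiplier estimate as the main obstacle. But there is a genuine gap precisely at that point: you treat $V=\widetilde q-1$ as a compactly supported $H^{s,p}$ function and propose to apply the fractional Leibniz rule to it directly. The hypothesis is only that the \emph{restriction} $\widetilde q\,|_{W\cap B_R}$ is $H^{s,p}$; globally $V=\chi_{W\cap B_R}\cdot(\text{an }H^{s,p}\text{ function})$ has jump discontinuities across the faces of the box, so it lies at best in $H^{\tau,p}$ for $\tau<1/p$, far below the threshold $s>N/p$ needed for Kato--Ponce/algebra arguments. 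The entire difficulty of the lemma --- and the reason the higher-dimensional theorems are restricted to rectangular boxes rather than general convex polyhedra --- is the characteristic-function factor. The paper handles it by writing $Q=Q_1+f\chi_K$ with $f\in H^{s,p}_{\mathrm c}$ and $K$ a cube, invoking a separate bound from \cite{BLS} showing that multiplication by $\chi_K$ is bounded on the Fourier--Besov space $\widehat{B^1_{r,1}}$ (this exploits the product structure of $\widehat{\chi_K}(\xi)=\prod_j \sin(a\xi_j/2)\,c/\xi_j$, up to modulation), and then reducing the smooth factor's contribution to the condition $\hat f\in L^1\cap L^r$, verified via $H^{s,p}\subset B^{s}_{p,\infty}\subset B^{N/p}_{p,1}$ for $s>N/p$ and a Littlewood--Paley/Hausdorff--Young computation. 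None of these three ingredients appears in your sketch, and without the splitting your fixed-point map is not known to be bounded, let alone a contraction.

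A secondary but real inaccuracy: the estimate $\norm{G_\rho f}_{L^r}\leq C\abs{\ima(\rho)}^{-1}\norm{f}_{L^{r'}}$ is not the Kenig--Ruiz--Sogge bound. The KRS inequality with the dual pair $1/r'-1/r=2/N$ is \emph{uniform} in $\rho$, i.e.\ carries no decay; the gain $\abs{\ima(\rho)}^{-1}$ together with an $L^r$ conclusion for every $2\leq r<\infty$ is exactly what the $\widehat{B^{1}_{r,1}}$/$\widehat{B^{-1}_{r,\infty}}$ machinery of \cite{BLS} is designed to produce, conditional on the multiplier property \eqref{bps_formula34}. So the choice of intermediate space is not a ``technical wrinkle'' to be pinned down later; it is forced, and it is what dictates the precise form ($\hat f\in L^1\cap L^r$) of the condition you must verify on the potential.
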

\begin{proof}
We can write $\widetilde{q} = 1 - \chi_K \varphi$ for some cube $K = [0,a]^N$ and for some $\varphi \in H^{s,p}_{\mathrm c}(\R^N)$ by the conditions on $\widetilde{q}$ and the Sobolev extension theorem on Lipschitz domains. Writing $m = \chi_K \varphi$, the equation that we need to solve is
\[
(\Delta + k^2(1-m)) u = 0 \qquad \text{in $D$}.
\]
The result would then follow from \cite[Theorem 2.3]{BLS}, except that this theorem was proved under the condition $\varphi \in C^{\infty}$ instead of $\varphi \in H^{s,p}$.

Inspecting the proof in \cite{BLS} we see that it is enough that the function
\[
Q = -k^2(1-m) \Phi_D,
\]
where $\Phi_D \in C^{\infty}_{\mathrm c}(\R^N)$ satisfies $\Phi_D = 1$ near $\overline{D}$, satisfies \cite[formula (34)]{BLS}, i.e.\ that one has
\begin{equation} \label{bps_formula34}
Q \in \widehat{B^1_{r,1}} \quad \text{and} \quad \norm{Qg}_{\widehat{B^1_{r,1}}} \leq C_Q \norm{g}_{\widehat{B^{-1}_{r,\infty}}}
\end{equation}
where the spaces are as in \cite{BLS}. Now we can write $Q = Q_1 + Q_2$ where $Q_1 = -k^2 \Phi_D \in C^{\infty}_{\mathrm c}(\R^N)$ and $Q_2 = f \chi_K$ where
\[
f = k^2 \varphi \Phi_D,
\]
so that $f \in H^{s,p}_{\mathrm c}(\R^N)$. We use \cite[Lemma 4.3 and preceding discussion]{BLS} to conclude that when $\mathrm{supp}(q) \subset B_R$, one has
\begin{align*}
\norm{q}_{\widehat{B^1_{r,1}}} &\leq C R \norm{\hat{q}}_{L^r}, \\
\norm{q g}_{\widehat{B^1_{r,1}}} &\leq 2R^2 \norm{\hat{q}}_{L^1} \norm{g}_{\widehat{B^{-1}_{r,\infty}}}, \\
\norm{\chi_K g}_{\widehat{B^1_{r,1}}} &\leq C_r \norm{g}_{\widehat{B^1_{r,1}}}.
\end{align*}

It follows that \eqref{bps_formula34} will be satisfied if the function $f$ defined above satisfies $\hat{f} \in L^1 \cap L^r$. Since $f \in L^1_{\mathrm c}(\R^N)$, we have $\hat{f} \in L^{\infty}(\R^N)$ and it is enough to check that $\hat{f} \in L^1$. But if $(\psi_j(\xi))_{j=0}^{\infty}$ is a Littlewood--Paley partition of unity and if $1 \leq p \leq 2$, we obtain by the H\"older and Hausdorff--Young inequalities that
\begin{align*}
\int_{\R^N} \abs{\hat{f}(\xi)} \,d\xi &= \sum_{j=0}^{\infty} \int_{\R^N} \psi_j(\xi) \abs{\hat{f}(\xi)} \,d\xi
\leq C \sum_{j=0}^{\infty} 2^{jN/p} \norm{\psi_j(\xi) \hat{f}(\xi)}_{L^{p'}} \\
&\leq C \sum_{j=0}^{\infty} 2^{jN/p} \norm{\psi_j(D) f}_{L^p}
\leq C \norm{f}_{B^{N/p}_{p,1}}
\end{align*}
where $\psi_j(D)$ is the Fourier multiplier with symbol $\psi_j(\xi)$ and the last norm is a Besov norm. Since $H^{s,p} \subset B^{s}_{p,\infty} \subset B^{N/p}_{p,1}$ for $s > N/p$, we get $\hat{f} \in L^1$ as required. This shows that \eqref{bps_formula34} is satisfied, which concludes the proof.
\end{proof}

\begin{proof}[Proof of Theorem \ref{shape-identification-in-3D} under assumption (b)]
It is enough to use Lemma \ref{lem3.1_second} to prove an analogue of Lemma \ref{3D-lemma}, and then argue as in the proof of Theorem \ref{shape-identification-in-2D}.
\end{proof}

\section{Concluding remarks}
In two dimensions, we have verified the uniqueness in identifying a convex penetrable scatterer of polygonal type with a single far-field pattern, provided the refractive index is discontinuous at the corner points but $C^\alpha$-H\"older continuous inside near the corners. In higher dimensions, the uniqueness applies to convex polyhedra with additional assumptions on the geometrical shape (i.e., boxes) and on the smoothness of the contrast.
In this study,
the smoothness assumption is required only near the corner points.

Our future efforts will be devoted to the uniqueness proof in 3D for convex polyhedra with general $C^\alpha$-H\"older ($\alpha>0$) continuous potentials. Since the CGO solutions can be constructed with plenty of generality, the 3D proof essentially requires to evaluate the Laplace transform of a harmonic homogeneous polynomial over a general three-dimensional corner domain and then to prove the vanishing of this polynomial through novel techniques. Another possible approach would be to analyze the corner and edge singularities of an elliptic equation with analytical Cauchy data in weighted H\"older spaces. Further results will be presented in a forthcoming publication. 

\section*{Acknowledgment}

The first author would like to acknowledge the support from the German Research Foundation (DFG) under Grant No.\ HU 2111/1-2. The second author was partly supported by an ERC Starting Grant (grant agreement no.\ 307023) and CNRS. The second and third authors were also supported by the Academy of Finland (Centre of Excellence in Inverse Problems Research), and they would like to thank the Institut Henri Poincar\'e Program on Inverse Problems in 2015 where part of this work was carried out.

\bibliographystyle{alpha}

\end{document}